\newcommand{\CC}{\mathbb{C}}
\newcommand{\QQ}{\mathbb{Q}}
\newcommand{\fp}{\mathbb{F}_p}
\newcommand{\fpbar}{\overline{\mathbb{F}}_p}
\newcommand{\gq}{\operatorname{Gal}(\overline{\QQ}/\QQ)}
\newcommand{\GL}{\operatorname{GL}}
\newcommand{\longto}{\longrightarrow}
\newcommand{\NN}{\mathbb{N}}
\newcommand{\SL}{\operatorname{SL}}
\newcommand{\tht}{\vartheta}
\newcommand{\ZZ}{\mathbb{Z}}
\newtheorem{thm}{Theorem}
\newtheorem{cor}[thm]{Corollary}
\newtheorem{prop}[thm]{Proposition}
\newtheorem{lem}[thm]{Lemma}
\theoremstyle{definition}
\newtheorem*{ack}{Acknowledgements}
\newtheorem*{ex}{Example}
\theoremstyle{remark}
\newtheorem*{rem}{Remark}
\begin{document}
\title{Computing level one Hecke eigensystems (mod $p$)}
\author[C. Citro]{Craig Citro}
\author[A. Ghitza]{Alexandru Ghitza}
\thanks{The first author was partly supported by NSF grant DMS-0713225.  Some
of the computations described in this paper were performed on W.~Stein's 
machines {\tt mod} and {\tt geom}, supported by NSF grant DMS-0821725.}
\address{
  Department of Mathematics\\
  University of Washington\\
  Box 354350\\
  Seattle, WA 98195-4350\\
  USA
}
\email{craigcitro@gmail.com}
\address{
  Department of Mathematics and Statistics\\
  The University of Melbourne\\
  Parkville, VIC, 3010\\
  Australia
}
\email{aghitza@alum.mit.edu}
\subjclass[2010]{Primary 11F11; Secondary 11F25, 11F33, 11F80, 11Y40}
\keywords{Modular forms, Hecke eigenvalues, Galois representations}

\begin{abstract}
  We describe an algorithm for enumerating the set of level $1$ systems
  of Hecke eigenvalues arising from modular forms (mod $p$).
\end{abstract}

\maketitle

\section{Introduction}

One of the cornerstone results of the modern arithmetic theory of modular
forms associates to every level $1$ Hecke eigensystem mod $p$ a unique odd
semisimple $2$-dimensional Galois representation (mod $p$) unramified
outside $p$.  This follows from the corresponding results of Deligne (and
Serre, and Eichler-Shimura) for eigenforms over $\ZZ$; a more direct
approach that avoids using the full machinery of Deligne's characteristic
zero theorem can be found in Proposition~11.1 of~\cite{Gross1990}.

Serre's conjecture (now a theorem of Khare-Wintenberger) says that all
Galois representations described above arise from level $1$ eigensystems.
In Section 8 of~\cite{Khare2007}, Khare recalls the well-known fact that
the set of level $1$ eigensystems (mod $p$) is finite of cardinality
$O(p^3)$ as $p\to\infty$, and he outlines an argument due to Serre showing
that this cardinality is $o(p^2)$ as $p\to\infty$.  Khare adds: ``\emph{It
will be of interest to get quantitative refinements of this},'' and guesses
that the cardinality is in fact asymptotic to $p^3/48$ as $p\to\infty$.  In
his PhD thesis, Centeleghe studies this question and proposes a precise
conjecture for the asymptotic behavior of the number of representations of
fixed conductor $N$ (see Conjecture~4.1.1 in~\cite{Centeleghe2009}).

The present paper describes an efficient algorithm for enumerating the set
of level $1$ eigensystems (mod $p$), and hence also the set of odd
semisimple $2$-dimensional Galois representations (mod $p$) unramified
outside of $p$.  The theoretical framework underlying our approach is based on
Tate's theory of theta cycles.  We use two alternative computational methods: 
the Victor Miller basis for modular forms of level $1$, and modular symbols
over finite fields.

In his recent preprint~\cite{Centeleghe2011}, Centeleghe attacks the
problem of counting the number of irreducible Galois representations by an
ingenious approach that requires computing with a single Hecke operator for
each prime $p$.  Unfortunately, this method only gives a lower bound on the
number of representations.  It is worth noting, however, that this lower
bound is generally very close to the known upper bound, and in many cases
(164 of the 299 cases considered in~\cite{Centeleghe2011}) allows one to
deduce the exact number.  See Section~\ref{sect:computation} for more on
the relationship between Centeleghe's work and ours.

\begin{ack}
  The authors wish to thank: C.~Khare for motivating and encouraging our work;
  W.~Stein for suggesting the use of the Victor Miller basis, useful discussions
  about decompositions into Hecke eigenspaces; K.~Buzzard and T.~Centeleghe for
  suggestions and corrections.
\end{ack}

\section{Review of modular forms mod $p$}

We recall the definition of modular forms mod $p$ of level $1$ and of
their Hecke operators.

Let $M_k(\CC)$ denote the complex vector space of holomorphic modular
forms of weight $k$ and level $1$.  There is a $\CC$-linear map that
associates to each modular form its $q$-expansion at the (only) cusp
$\infty$:
\begin{equation*}
  Q\colon M_k(\CC)\longto\CC[[q]],\quad 
  f\longmapsto f(q)=\sum_{n=0}^\infty a_nq^n.
\end{equation*}
By the $q$-expansion principle (Theorem~1.6.1 in~\cite{Katz1973}), 
this map is injective.

We define the $\ZZ$-module of forms with integer coefficients by
\begin{equation*}
  M_k(\ZZ)=Q^{-1}\left(\ZZ[[q]]\right)
\end{equation*}
and, for any $\ZZ$-module $R$, we define the $R$-module of forms with
$R$-coefficients by
\begin{equation*}
  M_k(R)=M_k(\ZZ)\otimes_{\ZZ} R.
\end{equation*}

We will define\footnote{Morally, the appropriate definition of 
modular forms mod $p$ is intrinsic, as global sections of line
bundles over the moduli stack of elliptic curves over $\fpbar$
(see Section~1.1 in~\cite{Katz1973}, Section~10 in~\cite{Gross1990},
or Section~2.1 in~\cite{Edixhoven1992}).
The naive definition we use is equivalent in level $1$ for $p\geq 5$,
by Theorem~1.8.2 and Remark~1.8.2.2 in~\cite{Katz1973}.}
the space of modular forms mod $p$ of level $1$ and
weight $k$ to be $M_k=M_k(\fp)$.  These are obtained by reducing
modulo $p$ the $q$-expansions of the modular forms with integer
coefficients. 

\subsection{Eisenstein series mod $p$}\label{sect:eis_mod_p}
There are two normalisations for Eisenstein series in characteristic zero.  
The first makes the coefficient of $q$ be one:
\begin{equation}\label{eq:gk}
  G_k=-\frac{B_k}{2k}+\sum_{n=1}^\infty \sigma_{k-1}(n)q^n.  
\end{equation}
The second one makes the constant coefficient be one:
\begin{equation}\label{eq:ek}
  E_k=-\frac{2k}{B_k} G_k=1-\frac{2k}{B_k}\sum_{n=1}^\infty \sigma_{k-1}(n)q^n.  
\end{equation}
We define Eisenstein series (mod $p$) by reducing the characteristic zero
Eisenstein series modulo $p$.  The first normalisation is problematic for
primes dividing the denominator of $B_k/(2k)$; by the von Staudt-Kummer
congruences (see Lemma~4 in~\cite{SwinnertonDyer1973}), this happens if and only
if $k$ is a multiple of $p-1$.  

\emph{Convention: To simplify notation, we will always
write $G_k$ to denote the Eisenstein series (mod $p$) of weight $k$, keeping in
mind that it is the reduction modulo $p$ of the $q$-expansion in~\eqref{eq:gk} 
if $k$ is not a multiple of $p-1$, and the reduction modulo $p$ of the 
$q$-expansion in~\eqref{eq:ek} if $k$ is a multiple of $p-1$.}

Since we will soon restrict our attention to forms of weight $\leq p+1$, the
latter situation will only occur for the \emph{Hasse invariant} $A$, which is
the reduction modulo $p$ of $E_{p-1}$.  The von Staudt-Kummer congruences
tell us that, apart from the constant coefficient, all coefficients of
$E_{p-1}$ are divisible by $p$, so the $q$-expansion of $A$ is simply
$A(q)=1\in\fp[[q]]$.

\subsection{Operators}

The spaces $M_k$ are equipped with a number of interesting linear
maps.  We will define them in the most economical way, by describing
their effect on $q$-expansions.  Suppose $f\in M_k$ has $q$-expansion
\begin{equation*}
  f(q)=\sum_{n=0}^\infty a_nq^n.
\end{equation*}

For every prime $\ell$, there is a Hecke operator $T_\ell\colon M_k\to M_k$ 
given by
\begin{equation*}
  (T_\ell f)(q)=\sum_{n=0}^\infty a_{n\ell} q^n + 
  \ell^{k-1}\sum_{n=0}^\infty a_n q^{n\ell}.
\end{equation*}

%
%

An important map is multiplication by the Hasse invariant $A$, defined 
in~\ref{sect:eis_mod_p}.  As we mentioned above, $A$ has $q$-expansion 
$A(q)=1$.  Multiplication by $A$ is an injective linear map
\begin{equation*}
  M_k\longto M_{k+(p-1)},\quad f\longmapsto Af.
\end{equation*}
Of course, it behaves like the identity map on the level of
$q$-expansions, and therefore commutes with the Hecke operators $T_\ell$.

If $f$ is a modular form (mod $p$), its \emph{filtration} is defined by
\begin{equation*}
  w(f)=\min\{k\in\NN\mid f=A^i g\text{ for some }g\in M_k, i\in\NN\}.
\end{equation*}

\subsection{The algebra of modular forms}
The product of a form of weight $k_1$ and a form of weight $k_2$ is a
modular form of weight $k_1+k_2$.  We take this multiplicative
structure into account by setting
\begin{equation*}
  M = \bigoplus_{k\in\ZZ} M_k.
\end{equation*}
This is a graded $\fp$-algebra of Krull dimension $2$.  The
$q$-expansion map
\begin{equation*}
  M\longto \fp[[q]],\quad f\longmapsto f(q)
\end{equation*}
is an algebra homomorphism with kernel $(A-1)M$.

\subsection{The theta operator}

There is a derivation on $M$, raising degrees by $p+1$:
\begin{equation*}
  \tht\colon M_k\longto M_{k+(p+1)},\quad f\longmapsto q\frac{d}{dq}f,
\end{equation*}
whose effect on $q$-expansions is
\begin{equation*}
  (\tht f)(q)=\sum_{n=0}^\infty n a_n q^n.
\end{equation*}

Katz gave a geometric construction of this operator and described some of its
properties in~\cite{Katz1977}.  Of these, we will need

\begin{prop}[Theorem (2) and Corollary (5) of~\cite{Katz1977}]
\label{prop:ker_theta}
\
\begin{enumerate}
\item If $f\in M_k$ has filtration $k$ and $p$ does not divide $k$, then 
$\tht f$ has filtration $k+p+1$.  
\item If $f\in M_k$ has $\tht(f)=0$, then $f$ has a unique expression of the 
form
\begin{equation*}
f = A^r g^p,
\end{equation*}
where $0\leq r\leq p-1$, $r+k\equiv 0\pmod{p}$, $g\in M_\ell$ and 
$p\ell+r(p-1)=k$.
\end{enumerate}
\end{prop}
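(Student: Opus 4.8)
The plan is to reduce both statements to the explicit description of $M$, valid for $p\ge 5$, as the polynomial ring $\fp[E_4,E_6]$, inside which $A$ is a squarefree element of weight $p-1$ and $\tht$ is the derivation obtained by reducing the classical Ramanujan identities modulo $p$:
\begin{equation*}
  \tht E_4=\tfrac13\bigl(\Phi E_4-A E_6\bigr),\qquad
  \tht E_6=\tfrac12\bigl(\Phi E_6-A E_4^2\bigr),\qquad \tht A=0,
\end{equation*}
where $\Phi=-24\,G_{p+1}$ (equivalently $E_{p+1}$ reduced mod $p$) is the weight-$(p+1)$ form whose $q$-expansion is the reduction of the quasimodular $E_2$; these are forced by comparing $q$-expansions with the Serre-derivative formulas, using $p\nmid 12$ and the Kummer congruences for Bernoulli denominators. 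The one structural fact I will lean on is that a nonzero $h\in M$ satisfies $w(h)=\deg h$ if and only if $A\nmid h$ in $\fp[E_4,E_6]$, which is immediate from unique factorisation and the definition of the filtration. I would establish part (b) first, as it is the more foundational statement.

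For part (b), the heuristic is that $\tht f=0$ means $f(q)$ is supported on exponents divisible by $p$, hence is a $p$-th power in $\fp[[q]]$, and the real task is to realise this $p$-th root inside $M$ with the correct weight. Since $\tht$ annihilates all $p$-th powers and $\tht A=0$, the subring $R:=\fp[A,E_4^p,E_6^p]$ lies in $\ker\tht$; writing $M^{(p)}=\fp[E_4^p,E_6^p]$ one has $R=\bigoplus_{r=0}^{p-1}A^r M^{(p)}$ (the $A^r$ are $M^{(p)}$-independent because $A^r$ has weight $\not\equiv 0\bmod p$ for $1\le r\le p-1$, and they span because $A^p=A(E_4^p,E_6^p)\in M^{(p)}$). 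Now $\tht$ is a nonzero $M^{(p)}$-linear derivation of $M$; by the theory of derivations in characteristic $p$ its field of constants has index $p$ in $\operatorname{Frac}(M)$, hence degree $p$ over $\operatorname{Frac}(M^{(p)})$ (since $[\operatorname{Frac}(M):\operatorname{Frac}(M^{(p)})]=p^2$), and as it contains $\operatorname{Frac}(R)$ — itself of degree $p$ over $\operatorname{Frac}(M^{(p)})$ because $R$ is $M^{(p)}$-free of rank $p$ — the two coincide. Thus $\ker\tht=M\cap\operatorname{Frac}(R)$, which, since $M$ is integral over $R$ and normal, is the integral closure of $R$ in its fraction field; and $R\cong\fp[X,Y,Z]/(Z^p-A(X,Y))$ is already normal — it is a hypersurface (hence $S_2$) whose singular locus lies over the finite common zero set of the partial derivatives of $A(X,Y)$ (hence $R_1$), where squarefreeness of $A$ and the invertibility of $p-1$ mod $p$, through Euler's relation, are exactly what is used. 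So $\ker\tht=R$; reading off the weight-$k$ piece and using that $A$ is a nonzerodivisor in the domain $M$ gives the unique expression $f=A^r g^p$ with $0\le r\le p-1$, $r+k\equiv 0\bmod p$, $g\in M_\ell$ and $p\ell+r(p-1)=k$.

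For part (a): $f\in M_k$ with $w(f)=k$ (so $A\nmid f$) and $p\nmid k$. Applying the Leibniz rule to a monomial $E_4^aE_6^b$ of weight $k$, the identities above show that each monomial contributes the same scalar $\tfrac{2a+3b}{6}=\tfrac{k}{12}$, so $\tht f\equiv \tfrac{k}{12}\,\Phi f\pmod A$ for homogeneous $f$ of weight $k$. If $p\mid k$ this already forces $A\mid\tht f$, so $w(\tht f)<k+p+1$; if $p\nmid k$, then $\tfrac{k}{12}$ is a unit and $f\not\equiv 0\pmod A$, so $\tht f\not\equiv 0\pmod A$ — in particular $\tht f\neq 0$ and $A\nmid\tht f$, whence $w(\tht f)=\deg\tht f=k+p+1$ by the structural fact — provided $\Phi$ is a nonzerodivisor modulo $A$, i.e.\ provided $\Phi=E_{p+1}\bmod p$ is coprime to the Hasse invariant in $\fp[E_4,E_6]$, equivalently vanishes at no supersingular point. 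I expect this coprimality to be the genuine obstacle: the $p\mid k$ half is purely formal, but the statement that $E_{p+1}$ avoids the supersingular locus does not seem extractable from $q$-expansion bookkeeping, and is most transparent in Katz's geometric construction of $\tht$ via the Gauss--Manin connection, the supersingular points being precisely the support of the relevant $p$-curvature. I would import this input, and hence the whole of Proposition~\ref{prop:ker_theta}, from~\cite{Katz1977}.
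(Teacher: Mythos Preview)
The paper does not prove this proposition at all: it is stated as a citation of Theorem~(2) and Corollary~(5) of~\cite{Katz1977}, with no accompanying argument. So there is no ``paper's own proof'' to compare your proposal against.

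Your sketch is mathematically coherent and goes well beyond what the paper offers. The reduction of part~(b) to identifying $\ker\tht$ with the subring $\fp[A,E_4^p,E_6^p]$ via a derivation/field-of-constants argument is a reasonable approach (essentially the Swinnerton-Dyer/Serre viewpoint), and for part~(a) you correctly isolate the one nontrivial input --- that $A$ and $\Phi$ (i.e.\ $E_{p-1}$ and $E_{p+1}$ mod~$p$) are coprime in $\fp[E_4,E_6]$ --- as the step requiring the geometric theory in~\cite{Katz1977}. Since you ultimately conclude by importing the result from Katz, your bottom line agrees with the paper's treatment; the difference is only that you have unpacked the algebraic skeleton of the argument where the paper is content to cite.
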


We use this to find out whether an \emph{eigenform} can be in the kernel of 
$\tht$:

\begin{prop}\label{prop:ker_theta_eigen}
  If $f$ is a Hecke eigenform and $\tht^i(f)=0$ for some $i$, then $f$
  is a scalar multiple of some power of the Hasse invariant $A$.
\end{prop}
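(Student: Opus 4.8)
The plan is to induct on $i$, reducing to the case $i=1$, and then exploit part~(2) of Proposition~\ref{prop:ker_theta} together with the Hecke-equivariance of $\tht$. First I would observe that $\tht$ commutes with all the Hecke operators $T_\ell$, since on $q$-expansions $\tht$ multiplies the $n$-th coefficient by $n$, and $T_\ell$ acts by an index shift combined with scaling by $\ell^{k-1}$; a direct check on coefficients shows the two commute. Hence if $f$ is a $T_\ell$-eigenform with eigenvalue $a_\ell$, then so is $\tht f$ (with the same eigenvalue), and by induction so is $\tht^{i-1}f$. Thus $g := \tht^{i-1}f$ is a Hecke eigenform with $\tht g = 0$, and it suffices to prove the statement for $g$ in place of $f$; in other words we may assume $i=1$.

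Next, assuming $\tht f = 0$ with $f$ a Hecke eigenform of some weight $k$, I would apply Proposition~\ref{prop:ker_theta}(2) to write $f = A^r h^p$ with $0 \le r \le p-1$, $h \in M_\ell$, and $p\ell + r(p-1) = k$. Since $A$ has $q$-expansion $1$, the $q$-expansion of $f$ equals that of $h^p$, which is $\sum b_n^p q^{np}$ if $h(q) = \sum b_n q^n$ (using that raising to the $p$-th power is the Frobenius on $\fp[[q]]$, killing cross terms). So the $q$-expansion of $f$ is supported on exponents divisible by $p$: $a_n = 0$ unless $p \mid n$. The goal is now to show $h$ is constant, i.e. $\ell = 0$, which forces $f = A^r h^p$ to be a scalar times $A^r$.

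To get that, I would feed the sparsity of the $q$-expansion of $f$ into the Hecke eigenform condition. Write $a_n$ for the coefficients of $f$ and normalise (the eigenform is determined up to scalar by its system of eigenvalues, and since $\tht f = 0$ means $n a_n = 0$ for all $n$, we already know $a_n = 0$ whenever $p \nmid n$ — consistent with the above). Apply the recursion coming from $T_\ell f = a_\ell f$ for primes $\ell \ne p$: comparing coefficients of $q$ on both sides gives $a_\ell \cdot a_1 = a_\ell$, but $a_1 = 0$ since $p \nmid 1$, so $a_\ell \cdot a_1 = 0$; more usefully, comparing the coefficient of $q^n$ for suitable $n$ and using multiplicativity-type relations forces $a_\ell = 0$ for all $\ell \ne p$. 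Then $f$ has $q$-expansion supported only on powers of $p$ coming from $T_p$, and iterating the analogous argument with $T_p$ (or directly: the only eigenform with $q$-expansion $a_0 + (\text{higher powers of } p)$ and all $T_\ell$-eigenvalues for $\ell\neq p$ forced) pins down that $h$ must be constant. Concretely: if $h$ were nonconstant of weight $\ell > 0$, then $h^p$ would be a nonconstant form whose Hecke eigenvalues (pulled back through Frobenius) would have to vanish, contradicting that a nonzero modular form of positive weight cannot have all $a_n = 0$ for $n \ge 1$ unless it is Eisenstein-like — and a cleaner route is to note $w(f) = \ell$ would then satisfy the filtration constraints incompatibly, or simply that the only Hecke eigenforms killed by $\tht$ with this rigidity are the powers of $A$.

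The main obstacle I anticipate is the last step: carefully ruling out a nonconstant $h$. The cleanest argument is probably this: since $\tht f = 0$, the filtration of $f$ is $\le \ell < k$ when $r > 0$, and Proposition~\ref{prop:ker_theta}(2) actually characterises the kernel completely, so $f$ being an eigenform means $h^p$ is (after untwisting by Frobenius) forced to be an eigenform $h$ whose eigenvalues are $p$-th roots of those of $f$; pushing this down in weight and invoking that $M_0$ (constants) and $M_\ell$ for small $\ell$ contain no cuspidal eigenforms, one concludes $\ell \in \{0\}$ is the only surviving option, whence $h$ is a scalar and $f$ is a scalar multiple of $A^r$. I would want to double-check whether one also needs $p \ge 5$ here (to have the naive and geometric definitions agree, as flagged in the footnote), and whether the edge case $r = p-1$, $\ell = 0$, giving $f$ a scalar multiple of $A^{p-1} = A^{p-1}$, is correctly covered — it is, since that is a power of $A$.
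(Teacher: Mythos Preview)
There are two genuine gaps. First, your reduction to $i=1$ is not valid as stated: knowing that $g=\tht^{i-1}f$ is a scalar times a power of $A$ tells you nothing directly about $f$. The missing observation is that $g$ lies in the image of $\tht$, hence is cuspidal; but a nonzero scalar multiple of $A^n$ has constant term $\neq 0$, so in fact $g=0$. This forces $\tht^{i-1}f=0$, and \emph{now} induction on $i$ applies to $f$ itself. The paper makes exactly this argument.

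Second, in the case $i=1$ your argument never reaches a conclusion. Working with $T_\ell$ for $\ell\neq p$ is circular: you already know $a_n=0$ whenever $p\nmid n$, so deducing $a_\ell=0$ adds nothing. The decisive operator is $T_p$: from $(T_p f)(q)=\sum a_{np}q^n$ and $T_p f=a(p)f$ one gets $a_{np}=a(p)a_n$ for all $n$, and combined with $a_m=0$ for $p\nmid m$ this forces $a_N=0$ for every $N\geq 1$, so $f(q)=a_0$ is constant. You gesture at $T_p$ but never carry this out. Finally, even granting $f(q)$ constant, your proposed ways of concluding that $f$ is a power of $A$ (filtration constraints, ``Eisenstein-like'' forms, Frobenius untwisting) are all hand-waving; the issue is precisely that a form of positive weight can have constant $q$-expansion without literally being in $M_0$. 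The paper handles this cleanly using the fact that the kernel of the $q$-expansion map $M\to\fp[[q]]$ is the ideal $(A-1)M$: normalising to $f(q)=1$, one writes $A-f=(A-1)h$ and compares top-degree homogeneous components to peel off factors of $A$, descending in weight until $f$ is literally $A$ or $1$.
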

\begin{proof}
  We start by proving the case $i=1$.

  By Proposition~\ref{prop:ker_theta}, the $q$-expansion of $f\in\ker\tht$ is
  of the form
  \begin{equation*}
    f(q)=a_0+a_pq^p+a_{2p}q^{2p}+\ldots
  \end{equation*}
  Since $f$ is an eigenvector for $T_p$ (say with eigenvalue $a(p)$), we have
  \begin{equation*}
    a(p)a_0+a(p)a_pq^p+\ldots = a(p)f(q) = (T_p f)(q) = a_0+a_pq+\ldots
  \end{equation*}
  We conclude that $a_p=0$, but then $a_{np}=0$ for all $n\geq 1$.  So the
  $q$-expansion of $f$ is actually constant $f(q)=a_0$.  We normalize $f$
  so that $f(q)=1$.  Then $A-f$ is in the kernel of the $q$-expansion 
  homomorphism, so
  \begin{equation*}
    A-f=(A-1)h\quad\text{for some }h=\sum_{j=0}^N h_j\in M,
  \end{equation*}
  where $h_j$ is homogeneous of degree $j$.
  
  We distinguish three possibilities:
  \begin{enumerate}
    \item The weight of $f$ is $p-1$.  Then $f$ and $A$ are both in $M_{p-1}$
      and have the same $q$-expansion, so by the $q$-expansion principle
      $f=A$.
    \item The weight of $f$ is $<p-1$.  Then comparing the highest degree
      terms in $A-f=Ah-h$ we see that $A=Ah_N$, which means that $h=1$ and
      $f=1$.
    \item The weight of $f$ is $>p-1$.  By looking at the highest degree 
      terms in $-f+A=Ah-h$ we get $f=-Ah_N$.  Note that $0=\tht(f)=\tht(h_N)$
      and $h_N$ is a Hecke eigenform with weight strictly less than the
      weight of $f$.  We repeat the whole argument with $f$ replaced by $h_N$,
      until we fall in one of the cases (a) or (b), and we are done since
      each step peels off a factor of $-A$.
  \end{enumerate}
  
  To finish the proof, we need to consider the case $i>1$.  So suppose
  $\tht^i(f)=0$, and let $g=\tht^{i-1}(f)$.  Suppose $g\neq 0$, then $g$ is a 
  Hecke eigenform satisfying $\tht(g)=0$, so by the case $i=1$ proved above, 
  we know that $g=cA^n$ for some $c, n$.  However, since $i>1$, $g$ is in the 
  image of $\tht$, hence $g=cA^n$ is a cusp form, which implies that $g=0$.
  We can therefore move all the way down to $\tht(f)=0$, from which we
  conclude by using the case $i=1$.
\end{proof}

\subsection{Hecke eigensystems}

In view of our interest in Galois representations unramified outside
$p$, we define the (away-from-$p$) Hecke algebra by
\begin{equation*}
  \mathscr{H}=\ZZ[T_\ell\mid\ell\neq p].
\end{equation*}

By a \emph{Hecke eigensystem} we will mean a ring homomorphism
\begin{equation*}
  \Phi\colon \mathscr{H} \longto \fpbar.
\end{equation*}

It is clear that the spaces $M_k$ are $\fp\mathscr{H}$-modules.  We
say that an eigensystem $\Phi$ occurs in $M_k$ if there exists a
nonzero $f\in M_k$ such that
\begin{equation*}
  Tf = \Phi(T)f\quad\text{for all }T\in\mathscr{H}.
\end{equation*}

If $\Phi$ is an eigensystem, we define the (first) \emph{twist} of $\Phi$ by
\begin{equation*}
  \Phi[1]\colon \mathscr{H}\longto\fpbar,\quad 
  T_\ell\longmapsto \ell \Phi(T_\ell).
\end{equation*}
It is clear that this operation can be repeated (at most) $p-1$ times
before getting back to $\Phi$.  The resulting eigensystems are called
the \emph{twists} of $\Phi$.  The twisting operation has a modular
interpretation: if $\Phi$ is the eigensystem of $f\in M_k$ and $\tht f\neq 0$,
then $\Phi[1]$ is the eigensystem of $\tht f\in M_{k+p+1}$.

We will say that two eigensystems $\Phi$ and $\Psi$ are \emph{equivalent}
(write $\Phi\sim\Psi$) if $\Phi$ is a twist of $\Psi$, i.e. if there exists
$i$ such that $\Phi=\Psi[i]$.

One of the crucial results for our computational work is due to Ash
and Stevens (Theorems 3.4 and 3.5 in~\cite{Ash1986}):
\begin{thm}[see Theorem 3.4 in~\cite{Edixhoven1992}]
  Every modular eigensystem has a twist that occurs in weight $\leq
  p+1$. 
\end{thm}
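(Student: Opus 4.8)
The plan is to work within Tate's theory of theta cycles, bringing the filtration down by combining $\tht$ with multiplication by the Hasse invariant $A$. Let $\Phi$ occur in $M_k$ via an eigenform $f$. Writing $f=A^i g$ with $g$ of weight $w(f)$, the form $g$ is again a Hecke eigenform with eigensystem $\Phi$ (multiplication by $A$ is injective, commutes with $\mathscr{H}$, and is the identity on $q$-expansions), so I may assume $w(f)=k$. If $\tht^j f=0$ for some $j$ then, by Proposition~\ref{prop:ker_theta_eigen}, $f$ is a scalar multiple of a power of $A$; reading off $q$-expansions shows that its eigensystem is $T_\ell\mapsto 1+\ell^{p-2}$, which already occurs in $M_{p-1}$ (for instance via $A$ itself), and we are done. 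So I may assume $\tht^j f\neq 0$ for all $j\geq 0$; then each $\tht^j f$ is an eigenform realizing the twist $\Phi[j]$, and the goal becomes to show $w(\tht^j f)\leq p+1$ for some $j$.

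Next I would pick $f$ so that $k=w(f)$ is minimal among all eigenforms whose eigensystem is a twist of $\Phi$, and suppose for contradiction that $k>p+1$; note that then every nonzero form in the $\Phi$-eigenspace of $M_k$ has filtration exactly $k$, so I may further take $f$ to be a $T_p$-eigenvector there. The main object is the \emph{theta cycle}: the filtrations $k_j:=w(\tht^j f)$ for $j=0,\dots,p-2$, all of which are $\geq k$ by minimality. By Proposition~\ref{prop:ker_theta}(a) one has $k_{j+1}=k_j+p+1$ whenever $p\nmid k_j$, whereas $k_{j+1}<k_j+p+1$ whenever $p\mid k_j$, with the shortfall a positive multiple of $p-1$ (filtrations in a fixed class modulo $p-1$ differ by multiples of $p-1$). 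After $p-1$ steps the eigensystem returns, so $\tht^{p-1}f$ is again an eigenform with eigensystem $\Phi$; its $q$-expansion is $f$ with the coefficients $a_n$, $p\mid n$, deleted, and comparing this with the $q$-expansion of $f$ --- their difference being, up to scalar, a $p$-th power, to which Proposition~\ref{prop:ker_theta}(b) applies --- constrains $w(\tht^{p-1}f)$ and hence the way the cycle wraps around (with a clean split according to whether that difference vanishes). The target is Tate's structure theorem (see also Jochnowitz): a theta cycle has exactly two indices where the filtration falls, positioned so that the minimum value attained around the cycle is $\leq p+1$ --- contradicting $k>p+1$, which finishes the proof.

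The hard part is this last step. The two facts recalled in Proposition~\ref{prop:ker_theta} bound the filtration drop only from one side, and do not by themselves locate the drops or control their sizes, so they do not suffice to force the minimum of the cycle below $p+1$; for small $p$ the conclusion in fact holds only because of accidental degeneracies of the spaces $M_k$. Pinning down where and by how much the filtration falls --- which is what makes the cycle ``close up'' in the right way --- requires the finer behaviour of $\tht$ near the locus $p\mid k$ and at the supersingular points, and is genuinely beyond a formal manipulation of the quoted statements.

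As an alternative, closer to the cited work of Ash--Stevens, one could instead pass through Eichler--Shimura to $\h^1$ of a congruence subgroup with coefficients in $\operatorname{Sym}^{k-2}$ of the standard representation, and exploit the Jordan--H\"older filtration of $\operatorname{Sym}^{k-2}(\fp^2)$ as an $\GL_2(\fp)$-module --- whose constituents are $\det$-twists of $\operatorname{Sym}^m(\fp^2)$ with $m\leq p-1$ --- together with the long exact cohomology sequences, to deduce that any eigensystem occurs, up to twist, in weight $\leq p+1$. There the representation theory is standard and the real obstacle is chasing the eigensystem through the long exact sequences, since a priori it might reappear only in $\h^0$ or $\h^2$.
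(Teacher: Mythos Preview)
The paper does not prove this theorem: it is quoted as a known result of Ash--Stevens, with a parallel reference to Edixhoven, and then used as input to the algorithm. Your two sketched routes in fact match the two cited sources --- the cohomological argument via the Jordan--H\"older constituents of $\operatorname{Sym}^{k-2}(\fp^2)$ is Ash--Stevens's, and the theta-cycle argument is the one carried out in Edixhoven (following Tate and Jochnowitz).

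Your theta-cycle outline is broadly accurate, and you are right to flag the decisive step as a genuine gap: Proposition~\ref{prop:ker_theta} tells you the filtration rises by $p+1$ at weights not divisible by $p$ and drops by some positive multiple of $p-1$ otherwise, but not by how much, and Tate's full analysis is what forces a low point $\leq p+1$ in every cycle. Two minor corrections: when you speak of the ``difference'' of $f$ and $\tht^{p-1}f$, these live in different weights, so one must compare $A^{p+1}f$ and $\tht^{p-1}f$ in weight $k+p^2-1$ before invoking Proposition~\ref{prop:ker_theta}(b); and the remark about small $p$ and ``accidental degeneracies'' is misplaced --- the argument is uniform for $p\geq 5$. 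The $T_p$-eigenvector reduction you make is harmless but not actually used in what follows. Since the paper itself defers entirely to the literature here, your sketch is an honest summary of what one finds there, with the hard step correctly located.
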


This indicates that, instead of having to work with spaces of arbitrary
weight, it suffices to restrict to weight $\leq p+1$ and take twists.

\subsection{The Sturm-Murty bound}

We need to be able to decide whether two eigensystems are equal by comparing
only finitely many of the eigenvalues.  The following result (due to Sturm and 
revisited by Murty) solves this problem in the case of two eigenforms of the 
same weight:

\begin{thm}[special case of Theorem~1 in~\cite{Murty1996}]
  Let $f$ and $g$ be holomorphic modular forms of weight $k$ and level $1$, 
  with Fourier coefficients $a_f(n)$ and $a_g(n)$.  Let $\beta(k)=k/12$ and 
  suppose that
  \begin{equation*}
    a_f(n) = a_g(n)\quad\text{for all }n\leq \beta(k).
  \end{equation*}
  Then $f=g$.  
\end{thm}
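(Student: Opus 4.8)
The plan is to deduce the Sturm–Murty bound for level $1$ from the Valence Formula (or, equivalently, from a dimension count for $M_k(\CC)$) by a contradiction/minimality argument. First I would form the difference $h = f - g$, which is again a holomorphic modular form of weight $k$ and level $1$, and whose $q$-expansion $h(q) = \sum_{n} c_n q^n$ satisfies $c_n = 0$ for all $n \le \beta(k) = k/12$. The goal is to show $h \equiv 0$. Suppose not; then $h$ has a well-defined order of vanishing $v_\infty(h)$ at the cusp $\infty$, namely the smallest $n$ with $c_n \ne 0$, and by hypothesis $v_\infty(h) > k/12$, i.e. $v_\infty(h) \ge \lfloor k/12 \rfloor + 1$.

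The key step is to contradict this with the Valence Formula for level $1$ modular forms: for any nonzero $h \in M_k(\CC)$,
\begin{equation*}
  v_\infty(h) + \tfrac12 v_i(h) + \tfrac13 v_\rho(h) + \sum_{\substack{P \in \SL_2(\ZZ)\backslash\mathbb{H} \\ P \ne i, \rho}} v_P(h) = \frac{k}{12},
\end{equation*}
where $i$ and $\rho = e^{2\pi i/3}$ are the usual elliptic points and all the other terms on the left are nonnegative. In particular $v_\infty(h) \le k/12$, which directly contradicts $v_\infty(h) > k/12$. Hence $h \equiv 0$ and $f = g$. (Alternatively, one can bypass the Valence Formula and argue with dimensions: if $h$ vanishes to order $\ge \lfloor k/12\rfloor + 1 \ge \dim M_k(\CC)$ at $\infty$, then since the map sending $h$ to the vector $(c_0, c_1, \ldots, c_{\dim M_k(\CC) - 1})$ of its first $\dim M_k(\CC)$ coefficients is injective on $M_k(\CC)$ — one form with prescribed leading behavior in each dimension — we again get $h = 0$; this uses only $\dim M_k(\CC) \le k/12 + 1$, which is classical.)

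The main subtlety — really the only one — is bookkeeping around the elliptic points and the fact that $\beta(k) = k/12$ need not be an integer: one must be slightly careful that "$c_n = 0$ for all $n \le k/12$" translates to "$v_\infty(h) \ge \lfloor k/12 \rfloor + 1 > k/12$" and that this strict inequality is incompatible with the Valence Formula, whose right-hand side is exactly $k/12$ and whose left-hand side has $v_\infty(h)$ appearing with coefficient $1$ among otherwise nonnegative contributions. Since we are in level $1$ (no cusps other than $\infty$, and the only fractional weights in the valence formula come from $i$ and $\rho$), there are no further complications; the genuinely general Sturm bound for higher level requires tracking all the cusps and the index $[\SL_2(\ZZ):\Gamma]$, but none of that is needed here. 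I expect the write-up to be short, with the Valence Formula (standard, e.g. Serre's \emph{Cours d'arithmétique}) doing all the work.
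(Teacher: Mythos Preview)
The paper does not give its own proof of this statement; it is quoted as a special case of a theorem of Murty, with only the remark that ``the proof works in any characteristic.'' Your argument via the Valence Formula is correct and is precisely the standard proof of the Sturm bound in level $1$: form $h=f-g$, observe that the hypothesis forces $v_\infty(h)>k/12$, and contradict the Valence Formula, whose right-hand side is exactly $k/12$ with $v_\infty(h)$ appearing among nonnegative terms on the left. Your bookkeeping around the case where $k/12$ is or is not an integer is fine (indeed $\lfloor k/12\rfloor+1>k/12$ always). The alternative dimension-count argument you sketch is also valid and amounts to the same thing, since the Valence Formula is what underlies the level~$1$ dimension formulas in the first place. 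There is nothing to compare against in the paper itself.
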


The proof works in any characteristic; via the relation between Fourier 
coefficients and Hecke operators we arrive at the form in which we will use 
the result:

\begin{prop}
  Let $\Phi$ and $\Psi$ be eigensystems occurring in the same weight $k$ and 
  suppose that
  \begin{equation*}
    \Phi(\ell) = \Psi(\ell)\quad\text{for all primes }\ell\leq \beta(k).
  \end{equation*}
  Then $\Phi=\Psi$.
\end{prop}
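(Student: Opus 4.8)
The plan is to reduce the statement about eigensystems to the cited Sturm--Murty theorem about modular forms, which we may invoke in characteristic $p$. First I would pick nonzero eigenforms: let $f\in M_k$ realize $\Phi$ and $g\in M_k$ realize $\Psi$, so that $T_\ell f=\Phi(\ell)f$ and $T_\ell g=\Psi(\ell)g$ for all primes $\ell\neq p$. Normalizing and using the recursion among the $T_\ell$ for eigenforms, one would like to say that the Fourier coefficients $a_f(n)$ and $a_g(n)$ agree for all $n\leq\beta(k)$ and then conclude $f=g$, hence $\Phi=\Psi$. The main technical point to watch is that the ``away-from-$p$'' Hecke algebra does not include $T_p$, so we only directly control $a_f(\ell)$ and $a_g(\ell)$ for $\ell\neq p$; I would address this below.

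Concretely, I would first observe that multiplying an eigenform by a scalar does not change its eigensystem, and that since $f$ is a $T_\ell$-eigenform for every $\ell\neq p$, its coefficients at prime powers $\ell^j$ and at integers coprime to each $\ell$ are determined by the eigenvalues $\Phi(\ell)$ via the standard multiplicativity and the three-term relation $a_f(\ell^{j+1})=\Phi(\ell)a_f(\ell^j)-\ell^{k-1}a_f(\ell^{j-1})$, which follows from the formula for $T_\ell$ on $q$-expansions given earlier. The only coefficients not pinned down this way are those involving the prime $p$. To handle this, I would pass to the forms $\tilde f$ and $\tilde g$ obtained from $f$ and $g$ by deleting all Fourier coefficients $a_n$ with $p\mid n$ (equivalently, applying the operator $1-U_pV_p$, or simply noting that the ``$p$-deprived'' $q$-expansion still makes sense mod $p$); since $\tilde f$ and $\tilde g$ are then determined purely by the prime-to-$p$ Hecke data, the hypothesis $\Phi(\ell)=\Psi(\ell)$ for all $\ell\leq\beta(k)$ forces $a_{\tilde f}(n)=a_{\tilde g}(n)$ for all $n\leq\beta(k)$, after scaling so that some common nonzero coefficient matches.

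I would then apply the Sturm--Murty theorem (whose proof, as the text notes, works in any characteristic) to conclude $\tilde f=\tilde g$, and hence $a_f(n)=a_g(n)$ for every $n$ coprime to $p$. In particular $\Phi(\ell)=a_f(\ell)=a_g(\ell)=\Psi(\ell)$ for \emph{all} primes $\ell\neq p$, which is exactly the assertion $\Phi=\Psi$ as ring homomorphisms $\mathscr{H}\to\fpbar$, since $\mathscr{H}=\ZZ[T_\ell\mid\ell\neq p]$ is generated by those $T_\ell$. The step I expect to be the main obstacle is the bookkeeping around the prime $p$: one must be careful that removing the $p$-divisible coefficients does not accidentally kill the form (it cannot, since the constant term and the $a_\ell$ for $\ell\neq p$ survive, and an eigenform with all prime-to-$p$ coefficients vanishing would have to be constant, which is the Hasse-invariant case and is easily excluded or handled separately), and that the Sturm bound $\beta(k)=k/12$ is still the correct one for the modified $q$-expansion. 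Once that is checked, the reduction is immediate.
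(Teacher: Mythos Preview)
Your core approach---pick eigenforms $f,g$ realizing $\Phi,\Psi$, translate the hypothesis on eigenvalues into equality of Fourier coefficients, and invoke Sturm--Murty---is exactly what the paper has in mind; the paper offers no more than the one-line remark ``via the relation between Fourier coefficients and Hecke operators''. The complication you introduce around $T_p$, however, is both unnecessary in the paper's setting and not correctly resolved.

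In every application the paper makes of this proposition the relevant weight satisfies $k\leq 2(p+1)$, so $\beta(k)=k/12<p$. Hence every integer $n\leq\beta(k)$ is coprime to $p$, and the coefficients $a_f(n)$ for such $n$ are completely determined by the away-from-$p$ eigenvalues via multiplicativity and the three-term recursion you wrote down. No ``$p$-depriving'' step is needed: after normalizing so that $a_1=1$, one applies Sturm--Murty directly to $f$ and $g$ in weight $k$ and reads off $\Phi=\Psi$.

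Your proposed fix for general $k$ has a genuine gap. The object $\tilde f$ obtained by deleting the coefficients $a_n$ with $p\mid n$ is \emph{not} a modular form of weight $k$; as a modular form it is $\tht^{p-1}f$, which has weight $k+(p-1)(p+1)=k+p^2-1$. The Sturm bound in that weight is $(k+p^2-1)/12$, not $k/12$, so agreement of coefficients only up to $\beta(k)$ is far from enough to conclude $\tilde f=\tilde g$. You flag exactly this as the ``main obstacle'', but it is not a bookkeeping matter that can simply be ``checked''; it is a real obstruction, and the argument as written does not go through once $\beta(k)\geq p$.
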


\section{Some consequences of the theory of theta cycles}

Let $f$ be a modular form such that $\tht f\neq 0$.  The 
\emph{$\tht$-cycle} of $f$ is defined to be the $(p-1)$-tuple of
integers
\begin{equation*}
  \left(w(\tht f), w(\tht^2 f), \ldots, w(\tht^{p-1}f)\right).
\end{equation*}

A lot is known about the structure of theta cycles.  For low weights, we
will use the following classification given by Edixhoven:

\begin{prop}[Proposition 3.3 in~\cite{Edixhoven1992}]\label{prop:theta_cycle}
  Let $p\geq 5$ be prime.
  Let $f$ be an eigenform (mod $p$) of weight and filtration $k$, where
  $k\leq p+1$.  Let $(a_\ell)$ denote the eigenvalues of $f$.
  \begin{enumerate}
    \item If $a_p\neq 0$ ($f$ is \emph{ordinary}), then the $\tht$-cycle of 
    $f$ is given by
      \begin{center}
        \begin{tabular}{l | l}
          weight & $\tht$-cycle \\ \hline
          \multirow{2}{*}
          {$4\leq k\leq p-1$} & $(k+(p+1),\ldots,k+(p-k)(p+1),$ \\
          & $\phantom{(}k^{\prime}+(p+1),\ldots,k^{\prime}+(k-1)(p+1))$ \\
          $k=p+1$ & $(p+1+(p+1),\ldots,p+1+(p-1)(p+1))$\\
        \end{tabular}
      \end{center}
      where $k^{\prime}=p+1-k$.  See Figure~\ref{fig:theta_ord}.
    \item If $a_p=0$ ($f$ is \emph{non-ordinary}), then the $\tht$-cycle of 
    $f$ is given by
      \begin{center}
        \begin{tabular}{l | l}
          weight & $\tht$-cycle \\ \hline
          \multirow{2}{*}
          {$4\leq k\leq p-1$} & $(k+(p+1),\ldots,k+(p-k)(p+1),k^{\prime\prime},$ \\
          & $\phantom{(}k^{\prime\prime}+(p+1),\ldots,k^{\prime\prime}+(k-3)(p+1),k)$ \\
          $k=p+1$ & does not occur\\
        \end{tabular}
      \end{center}
      where $k^{\prime\prime}=p+3-k$.  See Figure~\ref{fig:theta_nonord}.
  \end{enumerate}
\end{prop}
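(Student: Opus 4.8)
The plan is to follow the $\tht$-cycle of $f$ one application of $\tht$ at a time, using Proposition~\ref{prop:ker_theta} to dispatch the generic steps and handling the (at most two) exceptional steps explicitly. Recall that $\tht$ raises weight by $p+1$, so $w(\tht^{i}f)\equiv k+2i\pmod{p-1}$, and that part~(1) of Proposition~\ref{prop:ker_theta} gives $w(\tht^{i+1}f)=w(\tht^{i}f)+(p+1)$ whenever $p\nmid w(\tht^{i}f)$, while the filtration must strictly drop when $p\mid w(\tht^{i}f)$. Since $4\le k\le p-1$ forces $p\nmid(k+j)$ for $0\le j\le p-1-k$, a first run of length $p-k$ is governed entirely by Proposition~\ref{prop:ker_theta}(1): one gets $w(\tht^{j}f)=k+j(p+1)$ for $1\le j\le p-k$, and the identity $k+(p-k)(p+1)=p(p+1-k)=pk^{\prime}$ shows that the filtration at the end of this run is divisible by $p$, so the first drop happens there. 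Everything then hinges on locating the drops.

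I would pin down the cycle using two explicit computations. First, $\tht^{p-1}$ acts on $q$-expansions by $\sum a_n q^n\mapsto\sum_{p\nmid n}a_n q^n$ (because $n^{p-1}\equiv 1$ or $0\pmod p$); using the relation $a_{np}=a_p a_n$ (valid mod $p$ since $k\ge 2$) this $q$-expansion equals $f(q)-a_p f(q)^p$, while $w(f^p)=p\,w(f)=pk$ follows from the uniqueness statement in Proposition~\ref{prop:ker_theta}(2). Hence $w(\tht^{p-1}f)=pk$ when $f$ is ordinary, and $w(\tht^{p-1}f)=k$ when $f$ is non-ordinary (if $a_p=0$ then $a_{mp}=0$ for all $m$, so $f(q)$ and $(\tht^{p-1}f)(q)$ coincide). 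Second, I need the filtration of $\tht^{p-k+1}f=\tht^{k^{\prime}}f$, the point the first drop lands on; this is the genuinely exceptional computation, as Proposition~\ref{prop:ker_theta}(1) gives no information at multiples of $p$. Granting these, the rest is forced by Proposition~\ref{prop:ker_theta}(1): in the ordinary case the filtration climbs from $k^{\prime}+(p+1)$ by $p+1$ at each step, missing multiples of $p$ until it reaches $k^{\prime}+(k-1)(p+1)=pk$ at step $p-1$ (matching the first computation); in the non-ordinary case it climbs from $k^{\prime\prime}=p+3-k$ to $k^{\prime\prime}+(k-3)(p+1)=p(k-2)$, undergoes a second and final drop to $w(\tht^{p-1}f)=k$, and one more step of Proposition~\ref{prop:ker_theta}(1) (using $p\nmid k$) closes the cycle. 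The remaining rows follow the same pattern: for $k=p+1$ the first run already uses all $p-1$ steps with no interior drop, and the non-ordinary weight-$p+1$ case cannot occur, since there $\tht^{p-1}f=f$ would contradict the strictly increasing run of length $p-1$ (equivalently, the forced value $k^{\prime\prime}=2$ is not the filtration of any nonzero form).

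The main obstacle is the behavior of $\tht$ at a filtration divisible by $p$ — concretely, the filtration of $\tht^{k^{\prime}}f$, and, in the non-ordinary case, the location of the second drop. This is exactly the point where one needs more than Proposition~\ref{prop:ker_theta}, and I would supply it from Tate's theory of $\tht$-cycles in the form used in~\cite{Edixhoven1992} (building on work of Serre, Katz, and Jochnowitz); alternatively, one can argue that the cycle closes up after $p-1$ steps, so the total size of the drops is pinned down, and then the congruence $w(\tht^{i}f)\equiv k+2i\pmod{p-1}$ together with Proposition~\ref{prop:ker_theta}(1) controls the number of drops and, with the two explicit values above, their positions and landing points. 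The work left is then routine: checking that the intermediate filtrations along each run are coprime to $p$, so that the runs have exactly the claimed lengths, and verifying the elementary identities such as $k^{\prime}+(k-1)(p+1)=pk$ and $k^{\prime\prime}+(k-3)(p+1)=p(k-2)$.
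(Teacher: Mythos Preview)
The paper does not give its own proof of this proposition: it is quoted verbatim as Proposition~3.3 of~\cite{Edixhoven1992}, followed only by a remark on which parts of Edixhoven's statement were extracted. There is therefore no in-paper argument to compare your sketch against.

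For what it is worth, your outline is essentially the standard Tate--Jochnowitz argument as presented by Edixhoven. The generic steps are correctly dispatched by Proposition~\ref{prop:ker_theta}(a), the identification of $\tht^{p-1}f$ with $f-a_p f^p$ on $q$-expansions is the right anchor computation, and your ``alternative'' route---using that the cycle closes after $p-1$ steps, that each drop is a positive multiple of $p-1$, and that drops occur only at filtrations divisible by $p$, so the number and size of the drops are forced---is precisely how Jochnowitz pins down the landing points. So invoking~\cite{Edixhoven1992} at the exceptional step is not circular; you are simply reproducing that proof. One small wrinkle worth making explicit: you need not only that drops \emph{can} occur at filtrations divisible by $p$ but that they \emph{must} (i.e.\ $w(\tht g)<w(g)+p+1$ whenever $p\mid w(g)$), which is part of Katz's result and is what makes the drop-count come out exactly right.
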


\begin{rem}
  We have extracted from the statement of Proposition 3.3 
  in~\cite{Edixhoven1992} only the parts that are relevant to level $1$.  We 
  have also eliminated the unnecessary requirement that $f$ be a cusp form 
  (see Section~7 in~\cite{Jochnowitz1982}).
\end{rem}

\begin{figure}
\begin{center}
  \begin{tikzpicture}[scale=0.8]
    \tikzstyle{every node}=[draw, circle, fill=black, minimum size=4pt, inner sep=0pt]
    \draw (0,2.0) node (20) [label=left:\small{$k+(p+1)$}] {}
    -- (0.6,3.2) node (32) [] {}
    -- (1.2,4.4) node (44) [label=left:\small{$k+(p-k)(p+1)$}] {};
    \draw[loosely dotted] (44)
    -- (1.8,1.6) node (16) [label=right:\small{$k^\prime+(p+1)$}] {};
    \draw (16)
    -- (2.4,2.8) node (28) [] {}
    -- (3.0,4.0) node (40) [] {}
    -- (3.6,5.2) node (52) [] {}
    -- (4.2,6.4) node (64) [] {}
    -- (4.8,7.6) node (76) [] {}
    -- (5.4,8.8) node (88) [label=left:\small{$k^\prime+(k-1)(p+1)$}] {};
    \draw[loosely dotted] (88) .. controls (8,0) and (1.8,0) .. (20);

    \draw (8,2.4) node (24) [label=right:\small{$k+(p+1)$}] {}
    -- ++(0.6,1.2) node (36) [] {}
    -- ++(0.6,1.2) node (48) [] {}
    -- ++(0.6,1.2) node (60) [] {}
    -- ++(0.6,1.2) node (72) [] {}
    -- ++(0.6,1.2) node (84) [] {}
    -- ++(0.6,1.2) node (96) [] {}
    -- ++(0.6,1.2) node (108) [] {}
    -- ++(0.6,1.2) node (120) [] {}
    -- ++(0.6,1.2) node (132) [label=right:\small{$k+(p-1)(p+1)$}] {};
    \draw[loosely dotted] (132) .. controls (14,1) and (10,1) .. (24);
  \end{tikzpicture}
\end{center}
\caption{Theta cycles of ordinary forms: $4\leq k\leq p-1$ (left, 
$k^\prime=p+1-k$), $k=p+1$ (right).}
\label{fig:theta_ord}
\end{figure}
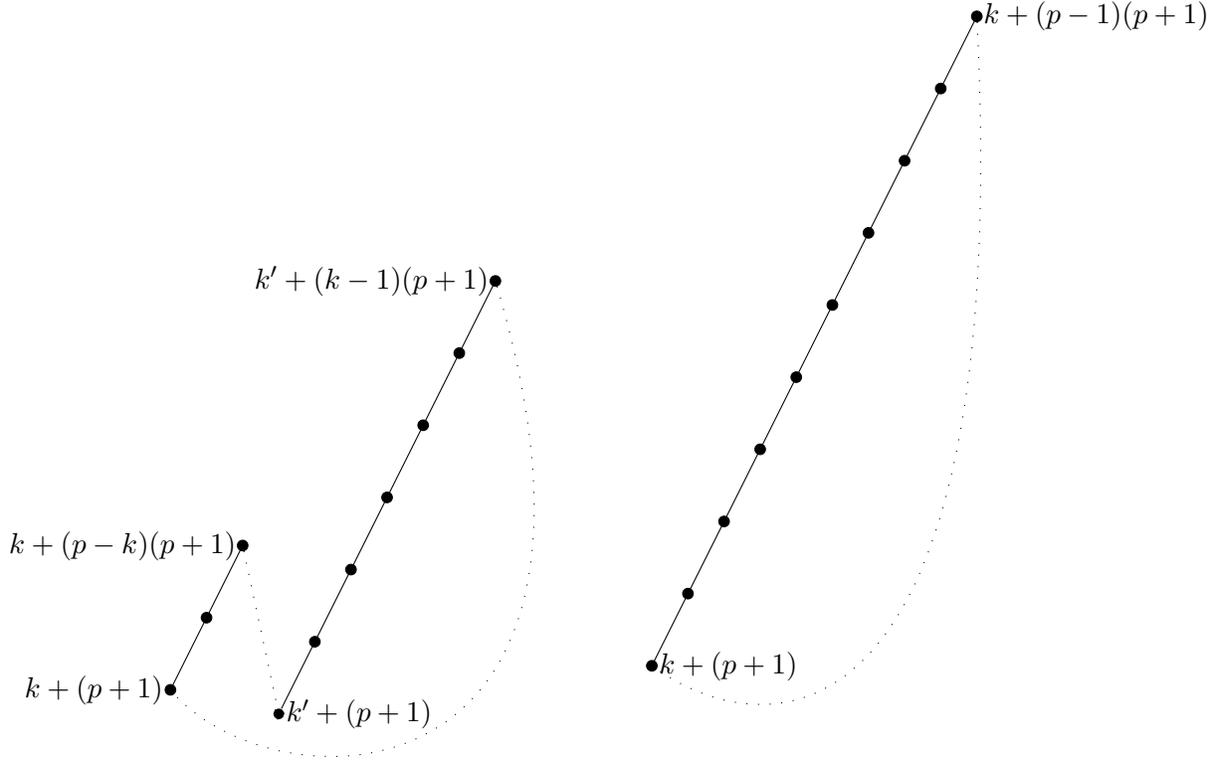

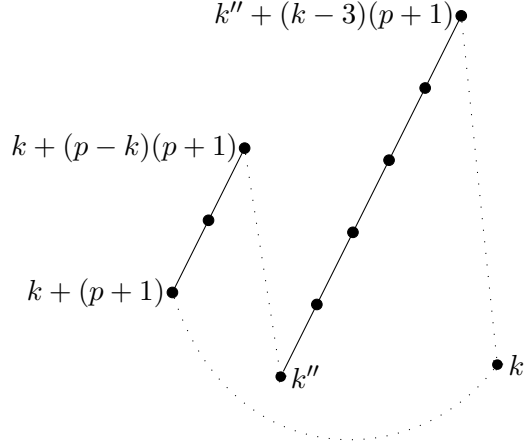
\begin{figure}
\begin{center}
  \begin{tikzpicture}[scale=0.8]
    \tikzstyle{every node}=[draw, circle, fill=black, minimum size=4pt, inner sep=0pt]
    \draw (0,2.0) node (20) [label=left:\small{$k+(p+1)$}] {}
    -- (0.6,3.2) node (32) [] {}
    -- (1.2,4.4) node (44) [label=left:\small{$k+(p-k)(p+1)$}] {};
    \draw[loosely dotted] (44)
    -- (1.8,0.6) node (6) [label=right:\small{$k^{\prime\prime}$}] {};
    \draw (6)
    -- (2.4,1.8) node (18) [] {}
    -- (3.0,3.0) node (30) [] {}
    -- (3.6,4.2) node (42) [] {}
    -- (4.2,5.4) node (54) [] {}
    -- (4.8,6.6) node (66) [label=left:\small{$k^{\prime\prime}+(k-3)(p+1)$}] {};
    \draw[loosely dotted] (66)
    -- (5.4,0.8) node (8) [label=right:\small{$k$}] {};
    \draw[loosely dotted] (8) .. controls (4,-1) and (1,-1) .. (20);
  \end{tikzpicture}
\end{center}
\caption{Theta cycle of a nonordinary form; $4\leq k\leq p-1$, 
$k^{\prime\prime}=p+3-k$.}
\label{fig:theta_nonord}
\end{figure}

\begin{lem}\label{lem:equiv_theta}
  Let $f_1$ and $f_2$ be eigenforms with $\Phi(f_1)\sim\Phi(f_2)$.
  Then the $\tht$-cycles of $f_1$ and $f_2$ are the same up to a cyclic
  permutation.
\end{lem}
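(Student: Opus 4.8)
The plan is to reduce the equivalence $\Phi(f_1)\sim\Phi(f_2)$ to the case of equal eigensystems, and then to compare the $q$-expansions of the $\tht$-iterates directly. Note first that the $\tht$-cycle of $f_i$ only makes sense when $\tht f_i\neq 0$, which in particular rules out $f_i$ being a scalar multiple of a power of $A$ (as $\tht$ kills such forms, since $A$ has $q$-expansion $1$); by the contrapositive of Proposition~\ref{prop:ker_theta_eigen} it follows that $\tht^j f_i\neq 0$ for every $j\geq 1$. Iterating the modular interpretation of twisting, each $\tht^j f_i$ is then an eigenform with eigensystem $\Phi(f_i)[j]$.

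The first step is to record the $(p-1)$-periodicity responsible for the cyclic permutation. Since $n^p\equiv n\pmod p$, the operator $\tht^p$ acts on $q$-expansions exactly as $\tht$, so for $j\geq 1$ the forms $\tht^{j+p-1}f$ and $\tht^{j}f$ have the same $q$-expansion; their weights differ by $(p-1)(p+1)$, so injectivity of the $q$-expansion map on each $M_k$ shows they differ by a power of $A$, and in particular $w(\tht^{j+p-1}f)=w(\tht^{j}f)$. Thus the $\tht$-cycle of $f$ is one full period of a $(p-1)$-periodic sequence, and the $\tht$-cycle of $\tht^m f$ is the $\tht$-cycle of $f$ shifted cyclically by $m$. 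Now if $\Phi(f_1)=\Phi(f_2)[i]$, then $\tht^i f_2$ is an eigenform with the same eigensystem as $f_1$ whose $\tht$-cycle is a cyclic shift of that of $f_2$; replacing $f_2$ by $\tht^i f_2$, it remains only to treat the case $\Phi(f_1)=\Phi(f_2)=:\Psi$.

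In this case, normalise each $f_i$ so that its first $q$-expansion coefficient is $1$ (an eigenform with vanishing first coefficient would have $\Psi(T_\ell)=0$ for all $\ell\neq p$ and hence $\tht f=0$). I claim that $\tht^j f_1$ and $\tht^j f_2$ have identical $q$-expansions for all $j\geq 1$, which finishes the proof: their weights differ by a multiple of $p-1$, so by injectivity of the $q$-expansion map one is a power of $A$ times the other, whence $w(\tht^j f_1)=w(\tht^j f_2)$ for all $j$ and the two $\tht$-cycles agree. To prove the claim, write $f_i=\sum_n a_n^{(i)}q^n$ and let $w_i$ be the weight of $f_i$; then $(\tht^j f_i)(q)=\sum_n n^j a_n^{(i)}q^n$, the terms with $p\mid n$ vanish, and for $p\nmid n$ the coefficient $a_n^{(i)}$ is a polynomial with integer coefficients in $\{\Psi(T_\ell)\}_{\ell\neq p}$ and $\{\ell^{w_i-1}\}_{\ell\neq p}$, by multiplicativity of the Fourier coefficients together with the Hecke recursion $a_{\ell^{e+1}}=\Psi(T_\ell)\,a_{\ell^e}-\ell^{w_i-1}a_{\ell^{e-1}}$. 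These expressions agree for $i=1,2$ as soon as $w_1\equiv w_2\pmod{p-1}$, and the claim follows.

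The one missing ingredient, and what I expect to be the main obstacle, is precisely the congruence $w_1\equiv w_2\pmod{p-1}$: the away-from-$p$ eigensystem $\Psi$ does not by itself record the numbers $\ell^{w_i-1}$ (equivalently, the coefficients $a_{\ell^2}$), so a priori two eigenforms realising $\Psi$ can have genuinely different $q$-expansions, and hence conceivably different $\tht$-cycles, unless their weights agree modulo $p-1$. To rule this out I would invoke the Galois representation attached to $\Psi$ by the result recalled in the introduction: since both $f_1$ and $f_2$ give rise to $\Psi$, they give rise to the same semisimple representation $\rho$, whose determinant is the $(w_i-1)$-st power of the mod-$p$ cyclotomic character; evaluating at a Frobenius element whose image generates $\fp^\times$ then forces $w_1\equiv w_2\pmod{p-1}$, as required.
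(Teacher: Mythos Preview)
Your argument follows essentially the same route as the paper's: both reduce to the case of equal eigensystems via a $\tht$-shift, observe that the resulting forms then have weights congruent modulo $p-1$, and compare $q$-expansions after applying $\tht$ to conclude equality of filtrations. The paper simply asserts the weight congruence (and the ensuing equality of $q$-expansions) without justification, whereas you correctly isolate this as the crux and supply the Galois-representation argument via the determinant; so your treatment is in fact more complete than the paper's on this point. One small slip: from $a_1=0$ you get $a_\ell=\Psi(T_\ell)a_1=0$ for all $\ell\neq p$, not $\Psi(T_\ell)=0$; but this still forces $a_n=0$ for all $n$ prime to $p$ and hence $\tht f=0$, so your conclusion stands.
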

\begin{proof}
  We start by reducing to the case where neither $f_1$ nor $f_2$ is in the
  kernel of $\tht$.  Suppose $f_1\in\ker(\tht)$, then by 
  Proposition~\ref{prop:ker_theta_eigen} we know that $f_1=cA^n$ for some $c, n$.
  Therefore $\Phi(f_1)=\Phi(A)=\Phi(G_{p+1})[p-2]$, so we may replace $f_1$
  by $G_{p+1}$, which is not in the kernel of $\tht$.  The same goes for
  $f_2$.

  Since the eigensystems are equivalent, there exists an integer $i$ such
  that $\Phi(f_1)=\Phi(\tht^i f_2)$.  In particular, the weight of $f_1$
  and the weight of $\tht^i f_2$ are congruent modulo $p-1$.  We have that
  $\tht(f_1)\neq 0$ and $\tht(\tht^i f_2)\neq 0$, so $\tht(f_1)$ and
  $\tht^{i+1}(f_2)$ have the same $q$-expansion, and their weights are
  congruent modulo $p-1$.  Without loss of generality, the weight of
  $\tht(f_1)$ is less than or equal to the weight of $\tht^{i+1}(f_2)$, so
  there exists $j$ such that $A^j\tht(f_1)$ has the same weight as
  $\tht^{i+1}(f_2)$.  These forms also have the same $q$-expansion, so they
  must be equal:
  \begin{equation*}
    A^j\tht f_1=\tht^{i+1} f_2.
  \end{equation*}
  But then for all $a\geq 1$ we have
  \begin{equation*}
    A^j\tht^a f_1=\tht^{i+a} f_2.
  \end{equation*}
  Since $w(Ag)=w(g)$ for all modular forms $g$, we conclude that the
  $\tht$-cycles of $f_1$ and $f_2$ are the same up to a cyclic permutation.
\end{proof}

We use Edixhoven's result to determine when two eigensystems are equivalent,
and to estimate the number of twists of a given eigensystem.

\begin{thm}\label{thm:equiv}
  Let $f_1$, respectively $f_2$ be eigenforms of weight and filtration
  $k_1$, respectively $k_2$, where
  \begin{equation*}
    1\leq k_1<k_2\leq p+1.
  \end{equation*}
  Suppose $\Phi(f_1)\sim\Phi(f_2)$, then we must be in one of the following
  two situations:
  \begin{enumerate}
    \item $a_p(f_1)\neq 0\neq a_p(f_2)$ and $k_1+k_2=p+1$;
    \item $a_p(f_1)= 0= a_p(f_2)$ and $k_1+k_2=p+3$.  
  \end{enumerate}
\end{thm}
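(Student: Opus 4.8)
The plan is to use Lemma~\ref{lem:equiv_theta}: since $\Phi(f_1)\sim\Phi(f_2)$, the $\tht$-cycles of $f_1$ and $f_2$ agree up to a cyclic permutation. In particular they have the same length $p-1$ (automatic) and, crucially, the same \emph{multiset of filtrations}. So the strategy is to read off from Proposition~\ref{prop:theta_cycle} the multiset of entries appearing in the $\tht$-cycle of an eigenform of filtration $k$ (distinguishing ordinary from non-ordinary, and treating the special case $k=p+1$ separately), and then ask: for which pairs $(k_1,k_2)$ with $1\le k_1<k_2\le p+1$ can these two multisets coincide?

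First I would dispose of the degenerate low weights. If $k_i\in\{1,2,3\}$ or $k_i=p-1$ then $f_i$ is forced (up to scalar) to be a power of $A$ times something in a very small space; in fact for $k\le 3$ there are no nonzero forms (level $1$, $p\ge5$) except the trivial ones, and $w(f)=p-1$ forces $f$ to be a multiple of the Hasse invariant $A$, which lies in $\ker\tht$ — but $f_i$ having $\tht f_i\ne0$ is implicitly needed to even speak of its $\tht$-cycle. One checks these boundary cases by hand (they either cannot occur or force $k_1=k_2$, contradicting $k_1<k_2$), reducing to $4\le k_i\le p-1$ or $k_i=p+1$. Then I would examine the table: for ordinary $f$ of filtration $k$ with $4\le k\le p-1$, the $\tht$-cycle consists of the weights $k+j(p+1)$ for $1\le j\le p-k$ together with $k'+j(p+1)$ for $1\le j\le k-1$, where $k'=p+1-k$; the two ``low points'' of the cycle are $k$ itself (re-attained after a full loop) and $k'$. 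For $k=p+1$ ordinary, the cycle is $p+1+j(p+1)$, $1\le j\le p-1$, with low point $p+1$. For non-ordinary $f$ of filtration $k$, the low points are $k$ and $k''=p+3-k$. The key observation is that the \emph{minimum} of the $\tht$-cycle is a cyclic-permutation invariant: in the ordinary case it is $\min(k,k')=\min(k,p+1-k)$, and in the non-ordinary case it is $\min(k,k'')=\min(k,p+3-k)$.

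Now I combine. If $f_1,f_2$ are both ordinary, equality of the $\tht$-cycle multisets forces $\{k_1,k_1'\}=\{k_2,k_2'\}$ (these are exactly the two filtrations in the cycle that are $\le p-1$ when $k\le p-1$, and the $k=p+1$ case is handled by noting its cycle has all entries $\equiv 0\pmod{p+1}$, which pins down $k_2=p+1$ and then $k_1=p+1-k_2$... i.e. forces $k_1 = 0$, impossible, so actually one shows $k=p+1$ forces the partner to also have the trivial structure and $k_1+k_2=p+1$ still holds formally with the understanding $p+1 = k$, $k' = 0$ corresponding to the constant). Since $k_1<k_2$ we get $k_1=k_2'=p+1-k_2$, i.e.\ $k_1+k_2=p+1$, which is case (a). If both are non-ordinary, the same reasoning with $k''=p+3-k$ in place of $k'$ gives $k_1=k_2''=p+3-k_2$, i.e.\ $k_1+k_2=p+3$, case (b). Finally I must rule out the \emph{mixed} case, one ordinary and one non-ordinary: here the $\tht$-cycles have visibly different ``shapes'' — e.g.\ the non-ordinary cycle contains a \emph{descent} from a weight $>p+1$ down to $k''\le p-1$ and later another down to $k$, whereas the ordinary cycle has its two low points being $k$ and $k'$ with a specific arithmetic-progression structure between them. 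Comparing multisets of residues modulo $p+1$, or comparing the two smallest elements and the gaps, shows the multisets cannot agree. This mixed case is the step I expect to be the main obstacle, since it requires genuinely comparing the fine structure of the two tables in Proposition~\ref{prop:theta_cycle} rather than just reading off an invariant; the cleanest route is probably to note that $a_p$ is itself (up to a twist-independent normalization) recoverable from whether the cycle ``passes through'' filtration $p+1$ in a prescribed way, or more directly to observe that in the ordinary cycle the number of entries equal to any fixed value is $1$ except the bottom which is visited from both sides, while the non-ordinary cycle has a different visiting pattern — contradiction.
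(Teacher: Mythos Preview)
Your approach is the same as the paper's: invoke Lemma~\ref{lem:equiv_theta} to get that the $\tht$-cycles agree up to cyclic permutation, then read off the low points from Edixhoven's classification. The paper's proof is two sentences long and simply says ``compare the general shape and the low points''; you have fleshed out what that means.

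There is one concrete miscalculation, and fixing it dissolves the very difficulty you flag as ``the main obstacle.'' In the ordinary case with $4\le k\le p-1$, the $\tht$-cycle (as defined in the paper) begins at $w(\tht f)$, not at $w(f)$: its entries are $k+j(p+1)$ for $1\le j\le p-k$ and $k'+j(p+1)$ for $1\le j\le k-1$. Hence the minimum of the cycle is $\min(k,k')+(p+1)$, not $\min(k,k')$. For $k=p+1$ the minimum is $2(p+1)$. In the non-ordinary case, on the other hand, the cycle literally contains the entries $k''$ and $k$, both $\le p-1$. So the minimum of an ordinary $\tht$-cycle is always $\ge p+3$, while the minimum of a non-ordinary one is always $\le p-1$; the mixed case is therefore ruled out by a single invariant, with no need to compare ``visiting patterns'' or residues mod $p+1$. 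The same invariant also disposes of $k_2=p+1$ cleanly: that cycle has minimum $2(p+1)$, which no cycle with $4\le k_1\le p-1$ attains, so no such pair exists (consistent with the theorem, since $k_1+k_2=p+1$ would force $k_1=0$). Your discussion of the degenerate weights $k\in\{1,2,3\}$ is unnecessary in level~$1$, where those spaces are zero.
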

\begin{proof}
  By Lemma~\ref{lem:equiv_theta}, the $\tht$-cycles of $f_1$ and $f_2$ 
  are the same up to a 
  cyclic permutation.  The two cases now follow by comparing the general
  shape and the low points of the cycles in Edixhoven's classification.
\end{proof}

\begin{prop}\label{prop:nr_twists}
  Let $f$ be an eigenform of weight and filtration $k$, where
  $1\leq k\leq p+1$, and let $\Phi=\Phi(f)$.  Let $n(\Phi)$ denote the
  number of distinct twists of $\Phi$.  Then $n(\Phi)=p-1$ unless
  $a_p\neq 0$ and $k=\frac{p+1}{2}$, in which case
  \begin{equation*}
    n(\Phi)\in\left\{\frac{p-1}{2},p-1\right\};
  \end{equation*}
\end{prop}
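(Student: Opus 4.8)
The plan is to use that the set of twists of $\Phi$ is an orbit for the action of $\ZZ/(p-1)\ZZ$ by twisting (the paper already records that $p-1$ twists return us to $\Phi$), so by orbit--stabiliser $n(\Phi)=(p-1)/|H|$ divides $p-1$, where $H=\{d\colon\Phi[d]=\Phi\}$. Hence $n(\Phi)<p-1$ precisely when $\Phi[d]=\Phi$ for some $1\le d\le p-2$, and then $n(\Phi)$ is the least such $d$ and divides $p-1$. So assume $\Phi[d]=\Phi$ with $d=n(\Phi)<p-1$. Note $f\notin\ker\tht$ by Proposition~\ref{prop:ker_theta_eigen}, since $w(f)=k\ge 1$. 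Running the argument in the proof of Lemma~\ref{lem:equiv_theta} with $f_1=f_2=f$ and $i=d$ gives $A^j\tht^a f=\tht^{d+a}f$ for a fixed $j\ge 0$ and all $a\ge 1$, so $w(\tht^a f)=w(\tht^{d+a}f)$ for all $a$: the $\tht$-cycle of $f$ is invariant under the cyclic shift by $d$, i.e.\ it is $d$-periodic.

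Next I would extract from Edixhoven's classification (Proposition~\ref{prop:theta_cycle}) the short list of $\tht$-cycles admitting a nontrivial period. Read cyclically, the $\tht$-cycle of an eigenform of weight and filtration $k$ has exactly two descents when $4\le k\le p-1$ (the internal drop and the wrap-around drop) and exactly one when $k=p+1$; a cyclic sequence of length $p-1$ with two descents can be $d$-periodic only for $d=p-1$ or $d=(p-1)/2$, so $d=(p-1)/2$. Matching the two halves of the cycle --- equal lengths of the ascending runs and equal low points --- then forces exactly one of: (i) $f$ ordinary with $k=(p+1)/2$ (so $k'=k$); or (ii) $f$ non-ordinary with $k=(p+3)/2$ (so $k''=k$, which also forces $p\equiv1\pmod4$ so that $M_k\ne0$). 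Case (i) is the exception allowed by the statement, where $n(\Phi)\in\{(p-1)/2,p-1\}$ is all that is asserted, so nothing more is needed. Everything therefore reduces to ruling out case (ii).

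So suppose for contradiction that $f$ is non-ordinary of weight and filtration $(p+3)/2$ and $\Phi[(p-1)/2]=\Phi$. Comparing the Hecke eigenvalue $a_\ell=\Phi(T_\ell)$ with the $\ell$-th Fourier coefficient of $\tht^{(p-1)/2}f$ --- which equals $\ell^{(p-1)/2}a_\ell=\left(\tfrac{\ell}{p}\right)a_\ell$ and carries the eigensystem $\Phi$ again --- gives $a_\ell=0$ for every prime $\ell$ that is a quadratic non-residue mod $p$ (using that non-ordinarity forces $a_n=0$ whenever $p\mid n$); equivalently, $\Phi$ is fixed by twisting with the quadratic character mod $p$. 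On the other hand Theorem~\ref{thm:equiv} shows $\Phi$ cannot also occur in weight $(p+1)/2$: two eigenforms of weights $(p+1)/2<(p+3)/2$ with equivalent eigensystems would need their weights to sum to $p+1$ or $p+3$, whereas they sum to $p+2$. The hard part --- the one place where theta cycles do not suffice --- is to derive a contradiction from these two facts: the semisimple mod $p$ representation $\rho_\Phi$ is then induced from $\QQ(\sqrt{p^*})$, and one must show that any such representation coming from level $1$ is reducible on a decomposition group at $p$, hence arises from an \emph{ordinary} form of weight $(p+1)/2$, contradicting both the non-ordinarity of $f$ and the fact that $\Phi$ does not occur in that weight. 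This local reducibility is essentially a class field theory statement about ray class characters of $\QQ(\sqrt{p^*})$ unramified outside $p$ --- the tamely ramified character that would have to appear at $p$ cannot descend to a global character, since it is forced to take the value $-1$ on a global unit --- and making it precise is where the real work lies.
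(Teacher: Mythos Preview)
Your reduction to $d$-periodicity of the $\tht$-cycle and your handling of the ordinary case $k=(p+1)/2$ match the paper's argument. Where you diverge is that you correctly observe the non-ordinary $\tht$-cycle with $k=(p+3)/2$ is \emph{also} $(p-1)/2$-periodic (the two low points $k$ and $k''=p+3-k$ coincide, and the two ascending runs have the same length), whereas the paper's proof simply asserts ``the only cycle that could have this shape occurs when $a_p\neq 0$'' and moves on. So you have identified a subtlety the paper does not address.

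The genuine gap in your proposal is that you do not actually dispose of this case~(ii). You yourself write that ``making it precise is where the real work lies'': you sketch a route through $\rho_\Phi$ being induced from $\QQ(\sqrt{p^*})$, claim a local-reducibility statement at $p$ via class field theory, and stop. That is a programme, not a proof, and the machinery you invoke (induced mod-$p$ representations, ray class characters of $\QQ(\sqrt p)$ for $p\equiv 1\pmod 4$, a unit computation you do not carry out) goes well beyond the elementary $\tht$-cycle and Sturm-bound tools the paper uses elsewhere. In particular, the step ``the tamely ramified character \ldots\ is forced to take the value $-1$ on a global unit'' is exactly the crux, and you give no argument for it. As written, then, your proposal does not establish the proposition: it is more careful than the paper's proof in flagging case~(ii), but leaves it open.
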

\begin{proof}
  Suppose $n(\Phi)\neq p-1$.  Then $n(\Phi)$ is a divisor of $p-1$, and the
  $\tht$-cycle of $f$ consists of copies of subcycles of length $n(\Phi)$.

  According to Edixhoven's classification, the only cycle
  that could have this shape occurs when $a_p\neq 0$, $4\leq k\leq p-1$.  
  This shape has two low points, so $n(\Phi)\geq (p-1)/2$.  Moreover, the two 
  low points must agree, i.e.
  \begin{equation*}
    k+p+1 = 2p+2-k\quad\Rightarrow\quad k=\frac{p+1}{2}.
  \end{equation*}
\end{proof}

An immediate consequence is:

\begin{cor}\label{cor:twists1}
  If $p\equiv 1\pmod{4}$ is a prime, then every Hecke eigensystem mod $p$
  has exactly $p-1$ twists.
\end{cor}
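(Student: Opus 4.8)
The plan is to reduce an arbitrary eigensystem to the eigensystem of an eigenform whose weight equals its filtration and lies in $[1,p+1]$, apply Proposition~\ref{prop:nr_twists}, and then observe that the only exceptional weight, $\tfrac{p+1}{2}$, is odd when $p\equiv 1\pmod 4$ and hence cannot be such a filtration.

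First I would note that $n(\Phi)$ depends only on the $\sim$-class of $\Phi$, since the set of twists of $\Phi$ is exactly one equivalence class. By the theorem of Ash and Stevens recalled above, $\Phi$ is equivalent to an eigensystem $\Psi=\Phi(f)$ occurring in some weight $m\leq p+1$, with $0\neq f\in M_m$. Writing $f=A^i g$ with $g\in M_{w(f)}$, the form $g$ is an eigenform of weight and filtration both equal to $k:=w(f)\leq p+1$, and $\Phi(g)=\Phi(f)=\Psi$: indeed $g$ and $A^i g$ have the same $q$-expansion, and the formulas for $T_\ell$ on the two forms agree modulo $p$ because $\mathrm{wt}(f)\equiv k\pmod{p-1}$ forces $\ell^{\mathrm{wt}(f)-1}\equiv\ell^{k-1}\pmod p$. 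If $k=0$, then $g$ is a nonzero constant, so $\Psi$ is the eigensystem of $A$, which by the identity $\Phi(A)=\Phi(G_{p+1})[p-2]$ from the proof of Lemma~\ref{lem:equiv_theta} is a twist of $\Phi(G_{p+1})$; since $\mathrm{wt}(G_{p+1})=p+1$ and the filtration differs from the weight by a non-negative multiple of $p-1$ while $M_2=0$, we have $w(G_{p+1})=p+1$, so we may instead take $g=G_{p+1}$ and $k=p+1$. Thus in every case $\Psi$ is the eigensystem of an eigenform of weight and filtration equal to a common integer $k$ with $1\leq k\leq p+1$.

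Now Proposition~\ref{prop:nr_twists} gives $n(\Phi)=n(\Psi)=p-1$ unless $a_p(g)\neq 0$ and $k=\tfrac{p+1}{2}$. But $g$ is a nonzero level~$1$ modular form of weight $k$, and there are no nonzero level~$1$ forms of odd weight: $M_k(\ZZ)\subseteq M_k(\CC)=0$ for odd $k$, hence $M_k=M_k(\ZZ)\otimes_\ZZ\fp=0$. So $k$ is even, whereas $p\equiv 1\pmod 4$ makes $\tfrac{p+1}{2}$ odd; the exceptional case is excluded and $n(\Phi)=p-1$.

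The only real work is the normalization in the second paragraph — passing from a given eigensystem to one represented by an eigenform with weight equal to filtration in the range $[1,p+1]$, in particular handling the constant eigensystem via $G_{p+1}$. Once that is in place, the parity of $\tfrac{p+1}{2}$ closes the argument at once, consistent with the word ``immediate'' in the statement.
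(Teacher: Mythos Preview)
Your proof is correct and follows exactly the idea the paper intends: the only possible exception in Proposition~\ref{prop:nr_twists} is weight $k=(p+1)/2$, which is odd when $p\equiv 1\pmod 4$ and hence supports no nonzero level~$1$ forms. You spell out the reduction to an eigenform of weight equal to filtration in $[1,p+1]$ (including the constant case via $G_{p+1}$) more carefully than the paper, which simply calls the corollary ``an immediate consequence'', but the approach is the same.
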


\begin{ex}
  In Section~\ref{sect:eisenstein} we prove that if $p\equiv 3\pmod{4}$, 
  $G_{(p+1)/2}$ always has $\tht$-cycle of length $(p-1)/2$.

  If $f$ is a cusp form of weight $(p+1)/2$, its $\tht$-cycle length can 
  be either $(p-1)/2$ or $p-1$.  We give an explicit example for each of these 
  two cases.  

  \begin{enumerate}
  \item The smallest example of a cusp form of weight $(p+1)/2$ with theta 
  cycle of length $(p-1)/2$ is $\Delta$ mod $23$:
  \begin{equation*}
    \Delta(q)=q + 22q^2 + 22q^3 + q^6 + q^8 + 22q^{13} + 22q^{16} + q^{23} + 
    22q^{24} + q^{25} + O(q^{26}).
  \end{equation*}

  I claim that $\tht^{12}\Delta =A^{12}\tht\Delta$, and hence the $\tht$-cycle
  of $\Delta$ has length $11$.  This alleged equality takes place in weight
  $300$, where the Sturm bound is $25$, so it suffices to check it on
  $q$-expansions up to that precision:
  \begin{eqnarray*}
    (\tht^{12} \Delta)(q) &=& q + 21q^{2} + 20q^{3} + 6q^{6} + 8q^{8} + 10q^{13} + 7q^{16} + 22q^{24} + 2q^{25} + O(q^{26}),\\
    (A^{12}\tht \Delta)(q) &=& q + 21q^{2} + 20q^{3} + 6q^{6} + 8q^{8} + 10q^{13} + 7q^{16} + 22q^{24} + 2q^{25} +  O(q^{26}).
  \end{eqnarray*}
  \item 
  The smallest example of a cusp form of weight $(p+1)/2$ with theta cycle of
  length $p-1$ occurs for $p=43$.  The space of cusp forms of weight $22$ is
  one-dimensional; denote its normalized generator by $\Delta_{22}$ (an 
  explicit expression for it is $\Delta_{22}=41 G_4^4 G_6+18 G_4 G_6^3$).  The 
  beginning of its $q$-expansion is
  \begin{equation*}
    \Delta_{22}(q) = q + 13q^{2} + 27q^{3} + 41q^{4} + 39q^{5} + O(q^6).
  \end{equation*}
  The following shows that the $\tht$-cycle length is not $21$:
  \begin{eqnarray*}
    (\tht^{22} \Delta_{22})(q) &=& q + 13q^{2} + 4q^{3} + 18q^{4} + 16q^{5} + O(q^{6}),\\
    (A^{22}\tht \Delta_{22})(q) &=& q + 3q^{2} + 12q^{3} + 3q^{4} + 11q^{5} + O(q^{6}).
  \end{eqnarray*}
  \end{enumerate}
\end{ex}

\section{Eigensystems coming from Eisenstein series}\label{sect:eisenstein}

\begin{prop}\label{prop:eisenstein}
  Let $4\leq k_1< k_2\leq p+1$ and let $\Phi_1$,
  $\Phi_2$ denote the eigensystems of the Eisenstein series
  $G_{k_1}$ and $G_{k_2}$.  Then $\Phi_1\sim\Phi_2$ if and only if 
  $k_1+k_2\equiv 2\pmod{p-1}$.  In this case, $\Phi_2=\Phi_1[p-k_1]$.  
\end{prop}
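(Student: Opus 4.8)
The plan is to reduce everything to explicit computations with the Eisenstein eigensystems, using the fact that the eigenvalues of $G_k$ are known in closed form: $G_k$ is an eigenform for all $T_\ell$ with $\ell\neq p$, and $a_\ell(G_k) = 1 + \ell^{k-1}$, so $\Phi_i(T_\ell) = 1 + \ell^{k_i - 1}$ in $\fp$. The twisting operation sends $\Phi(T_\ell) = 1+\ell^{k-1}$ to $\Phi[j](T_\ell) = \ell^j(1+\ell^{k-1}) = \ell^j + \ell^{k-1+j}$. So the claim $\Phi_2 = \Phi_1[p-k_1]$ becomes the assertion that $1 + \ell^{k_2-1} = \ell^{p-k_1} + \ell^{k_1-1+p-k_1} = \ell^{p-k_1} + \ell^{p-1}$ for every prime $\ell\neq p$, i.e.\ (using $\ell^{p-1}\equiv 1$) that $\ell^{k_2-1} \equiv \ell^{p-k_1} \pmod p$ for all such $\ell$.

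\textbf{Forward direction.} Suppose $k_1 + k_2 \equiv 2 \pmod{p-1}$, say $k_2 = 2 - k_1 + m(p-1)$ for some integer $m$. Then $k_2 - 1 \equiv 1 - k_1 \equiv p - k_1 \pmod{p-1}$, so $\ell^{k_2-1} = \ell^{p-k_1}$ in $\fp^\times$ for every $\ell\neq p$ (both exponents are congruent mod $p-1$, the order of $\fp^\times$). Hence $\Phi_2(T_\ell) = 1 + \ell^{k_2-1} = \ell^{p-k_1} + \ell^{p-1} = \ell^{p-k_1}(1 + \ell^{k_1-1}) = \Phi_1[p-k_1](T_\ell)$, which gives $\Phi_2 = \Phi_1[p-k_1]$, and in particular $\Phi_1\sim\Phi_2$.

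\textbf{Reverse direction.} Suppose $\Phi_1\sim\Phi_2$, so there is some $j$ with $\Phi_2 = \Phi_1[j]$, i.e.\ $1 + \ell^{k_2-1} = \ell^j + \ell^{k_1-1+j}$ in $\fp$ for all primes $\ell\neq p$. The main obstacle is turning this family of identities (one per prime $\ell$) into a relation between exponents; the standard move is that the polynomial $X^{k_2-1} - X^{k_1-1+j} - X^j + 1 \in \fp[X]$ vanishes at infinitely many integers, hence at all of $\fp$ (since by Dirichlet each residue class mod $p$ contains a prime $\ell\neq p$), and a polynomial of degree $< p$ that vanishes on all of $\fp$ must be $\equiv 0$ modulo $X^p - X$ — so after reducing exponents mod $p-1$ and the constant appropriately, the monomials must match up. One should be slightly careful about which exponents fall in the range $[1,p-1]$ versus reducing to $0$ or $1$; this is where the hypothesis $4\le k_1 < k_2 \le p+1$ earns its keep, pinning down $k_1 - 1$ and $k_2 - 1$ to explicit small ranges so that matching $\{0, k_2-1\}$ with $\{j, k_1-1+j\}$ as multisets of exponents mod $p-1$ forces $j \equiv p - k_1$ and $k_2 - 1 \equiv p - 1 + (p-k_1) \equiv p - k_1 \pmod{p-1}$, whence $k_1 + k_2 \equiv 2 \pmod{p-1}$. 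Alternatively, and perhaps more cleanly, one can invoke Theorem~\ref{thm:equiv}: since $G_{k_1}$ and $G_{k_2}$ are non-ordinary Eisenstein series (their $a_p = 1 + p^{k-1} \equiv 1 \ne 0$, so actually ordinary — one checks $a_p(G_k)=1+p^{k-1}\equiv 1$), case~(a) applies and gives $k_1 + k_2 = p+1$ directly when both filtrations equal the weights; combined with the weight bound this yields $k_1+k_2 \equiv 2 \pmod{p-1}$, and then the forward direction supplies $\Phi_2 = \Phi_1[p-k_1]$. I expect the cleanest writeup to use the explicit eigenvalue computation for the ``if'' part and either the polynomial argument or Theorem~\ref{thm:equiv} for the ``only if'' part, with the one genuine subtlety being the behavior at the endpoints $k=p+1$ (where $w(G_{p+1})$ needs a moment's thought) and the interplay with the filtration hypothesis.
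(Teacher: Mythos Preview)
Your primary line of argument --- direct computation with the Eisenstein eigenvalues for the forward direction, and a polynomial-over-$\fp$ root count for the reverse --- is exactly the paper's approach. One correction: primes $\ell\neq p$ cover only $\fp^\times$, not all of $\fp$ (the residue class $0$ contains no such prime), so your polynomial is only known to vanish at $p-1$ points; to conclude it is identically zero you need degree $\le p-2$, which is precisely why the paper first reduces the exponents $i$, $i+k_1-1$, $k_2-1$ modulo $p-1$ before forming $f(x)=x^a+x^b-1-x^c$. After that reduction the monomial-matching gives either $a=0,\ b=c$ (forcing $k_1=k_2$, excluded) or $b=0,\ a=c$ (forcing $k_1+k_2\equiv 2\pmod{p-1}$ and $i\equiv p-k_1$), exactly as you sketch.

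Your proposed alternative via Theorem~\ref{thm:equiv} has a gap you did not flag: the case $k_1=p-1$. By the paper's convention $G_{p-1}$ is the Hasse invariant $A$, whose filtration is $0$, not $p-1$, so the hypothesis ``weight equals filtration'' in that theorem fails. This is not a phantom case: with $k_2=p+1$ one checks $\Phi(A)[1](T_\ell)=\ell(1+\ell^{p-2})=\ell+1=\Phi(G_{p+1})(T_\ell)$, so $\Phi_1\sim\Phi_2$ with $k_1+k_2=2p\equiv 2\pmod{p-1}$, yet Theorem~\ref{thm:equiv} does not apply directly. You could patch this endpoint by trading $A$ for $G_{p+1}$ as in the proof of Lemma~\ref{lem:equiv_theta}, but at that point the polynomial argument is cleaner and self-contained.
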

\begin{proof}
  Suppose $k_1+k_2\equiv 2\pmod{p-1}$.  On one hand
  we have
  \begin{equation*}
    \Phi_1[p-k_1](T_\ell)=\ell^{p-k_1}\left(1+\ell^{k_1-1}\right)=\ell^{p-k_1}+1.
  \end{equation*}
  On the other hand:
  \begin{equation*}
    k_1+k_2\equiv 2\pmod{p-1}\quad\Rightarrow\quad
    k_2\equiv p+1-k_1\pmod{p-1},
  \end{equation*}
  so
  \begin{equation*}
    \Phi_2(T_\ell)=1+\ell^{k_2-1}=1+\ell^{p+1-k_1-1}.
  \end{equation*}

  For the other implication, suppose
  $\Phi_2=\Phi_1[i]$ for some $i$.  This means that
  \begin{equation*}
    \ell^i + \ell^{i+k_1-1} \equiv 1 + \ell^{k_2-1}\pmod{p}
  \end{equation*}
  for all primes $\ell\neq p$.  Let $a$, $b$, $c$ be the respective
  remainders of the division by $p-1$ of $i$, $i+k_1-1$, $k_2-1$.  (In
  particular, $a, b, c < p-1$.)  Then in $\fp$ we have 
  \begin{equation}\label{eq:eis_cong}
    \alpha^a + \alpha^b = 1 + \alpha^c
    \quad\text{for all }\alpha\in\fp^\times.
  \end{equation}
  Consider the polynomial
  \begin{equation*}
    f(x)=x^a+x^b-1-x^c\in\fp[x].
  \end{equation*}
  The degree of $f$ is at most $p-2$ (or $f$ is the zero polynomial).
  If $f\neq 0$ then $f$ has at most $p-2$ roots in $\fp$.
  However, equation~(\ref{eq:eis_cong}) implies that $f$ has $p-1$
  roots in $\fp$, so we must have that $f=0$.

  We have two possibilities: (i) $a=0$ and $b=c$, which implies $i=0$
  and $k_1=k_2$, contradicting the assumption that $k_1<k_2$; (ii)
  $b=0$ and $a=c$, which implies
  \begin{equation*}
    k_1+k_2\equiv 2\pmod{p-1}\quad\text{and}\quad
    i\equiv k_2-1\equiv p+k_2-2\equiv p-k_1\pmod{p-1}.
  \end{equation*}
\end{proof}

\begin{prop}\label{prop:eisenstein_twists}
  Let $4\leq k\leq p+1$.  The Eisenstein series $G_k$ has $p-1$ twists, unless 
  $p\equiv 3\pmod{4}$ and $k=(p+1)/2$, in which case $G_k$ has $(p-1)/2$ 
  twists.   
\end{prop}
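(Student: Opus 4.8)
The plan is to work directly with the eigensystem $\Phi_k$ of $G_k$, which is given explicitly by $\Phi_k(T_\ell)=1+\ell^{k-1}$ for every prime $\ell\neq p$ (this holds for all $4\leq k\leq p+1$; when $k=p-1$ the relevant form is $E_{p-1}=A$ and the formula reads $1+\ell^{p-2}$, so that case needs no separate treatment). We may assume $k$ is even, since $G_k=0$ for odd $k$ in level $1$, and, as elsewhere in this section, $p\geq 5$. The $i$-th twist is then $\Phi_k[i](T_\ell)=\ell^i+\ell^{i+k-1}$, so $\Phi_k[i]=\Phi_k[j]$ precisely when $\ell^i+\ell^{i+k-1}=\ell^j+\ell^{j+k-1}$ in $\fp$ for all primes $\ell\neq p$.

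First I would turn this into a polynomial condition, exactly as in the proof of Proposition~\ref{prop:eisenstein}: by Dirichlet's theorem the residues of the primes $\ell\neq p$ fill out $\fp^\times$, so the displayed identity for all such $\ell$ is equivalent to $x^i+x^{i+k-1}=x^j+x^{j+k-1}$ for all $x\in\fp^\times$; reducing all four exponents modulo $p-1$ and using that a nonzero element of $\fp[x]$ of degree at most $p-2$ cannot vanish at all $p-1$ points of $\fp^\times$, one finds that the multisets $\{i,\,i+k-1\}$ and $\{j,\,j+k-1\}$ must coincide modulo $p-1$.

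Next I would extract the consequences. Either $i\equiv j\pmod{p-1}$, or else $i\equiv j+k-1$ and $i+k-1\equiv j\pmod{p-1}$, and the latter pair forces $2(k-1)\equiv 0\pmod{p-1}$, i.e.\ (since $p-1$ is even) $k\equiv 1$ or $k\equiv\tfrac{p+1}{2}\pmod{p-1}$. In the first subcase the extra relation degenerates to $i\equiv j$ --- and in the range considered $k\equiv 1$ makes $k$ odd, so $G_k=0$ anyway; in the second subcase it reads $i\equiv j+\tfrac{p-1}{2}$, so the $p-1$ twists of $G_k$ collapse in pairs and there are exactly $(p-1)/2$ of them. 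Thus $G_k$ has $p-1$ twists unless $k\equiv\tfrac{p+1}{2}\pmod{p-1}$, in which case it has $(p-1)/2$. To conclude I would observe that for $p\geq 5$ any even $k$ with $4\leq k\leq p+1$ satisfying $k\equiv\tfrac{p+1}{2}\pmod{p-1}$ must equal $\tfrac{p+1}{2}$, and that $\tfrac{p+1}{2}$ is even exactly when $p\equiv 3\pmod 4$ (when $p\equiv 1\pmod 4$ it is odd, so $G_{(p+1)/2}=0$ and there is nothing to prove); this is precisely the asserted dichotomy.

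The computation is really just bookkeeping, so the points to watch are the two places where parity enters and must be handled consistently: the congruence $2(k-1)\equiv 0\pmod{p-1}$ has the two solution classes $k\equiv 1$ and $k\equiv\tfrac{p+1}{2}$, of which only the second yields genuine coincidences among twists; and odd-weight Eisenstein series vanish in level $1$, which is exactly why the exceptional weight $\tfrac{p+1}{2}$ is an honest case only for $p\equiv 3\pmod 4$. (One also notes that $k=p-1$ is not exceptional, since $\tfrac{p+1}{2}=p-1$ forces $p=3$.) Alternatively, one could invoke Proposition~\ref{prop:nr_twists} after checking that $w(G_k)=k$ for every even $k$ in range with $k\neq p-1$ and that $a_p(G_k)=1+p^{k-1}\neq 0$; that proposition then supplies everything except the exact collapsing in the case $k=\tfrac{p+1}{2}$, which still needs the identity $\Phi_k[j]=\Phi_k[j+\frac{p-1}{2}]$ recorded above.
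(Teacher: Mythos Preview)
Your argument is correct, and it follows a genuinely different route from the paper's. The paper leans on the theta-cycle machinery: it disposes of $p\equiv 1\pmod 4$ via Corollary~\ref{cor:twists1}, and for $p\equiv 3\pmod 4$ it invokes Proposition~\ref{prop:nr_twists} (hence Edixhoven's classification) to see that only $k=(p+1)/2$ can be exceptional, then checks the single identity $\Phi[(p-1)/2]=\Phi$ in that case. You instead determine all coincidences $\Phi_k[i]=\Phi_k[j]$ directly, by the same polynomial-vanishing trick that drives Proposition~\ref{prop:eisenstein}; this is more elementary and entirely self-contained, and it has the incidental virtue of handling $k=p-1$ cleanly (where $G_{p-1}=A$ satisfies $\tht A=0$, so its theta cycle is undefined and the paper's appeal to Proposition~\ref{prop:nr_twists} is, strictly speaking, not immediately applicable without passing to the twist-equivalent $G_{p+1}$). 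The paper's route, on the other hand, places the result inside the uniform structural picture of Proposition~\ref{prop:nr_twists}, which is the natural framework once one also wants to treat cusp forms---as you yourself note in your closing remark about the alternative approach.
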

\begin{proof}
  The case $p\equiv 1\pmod{4}$ is taken care of by Corollary~\ref{cor:twists1}.

  Suppose now that $p\equiv 3\pmod{4}$.  Eisenstein series are always 
  ordinary, so $a_p\neq 0$.  If $k\neq (p+1)/2$ then $E_k$ has theta cycle of 
  length $p-1$ by Proposition~\ref{prop:nr_twists}.  In the remaining case 
  $k=(p+1)/2$, let $\Phi$ be the eigensystem of $G_k$.  We easily see that
  \begin{eqnarray*}
    \Phi(T_\ell)&=&1+\ell^{(p+1)/2-1}=1+\ell^{(p-1)/2}\\
    \Phi[(p-1)/2](T_\ell)&=&\ell^{(p-1)/2}\left(1+\ell^{(p-1)/2}\right)=
    \ell^{(p-1)/2}+1,
  \end{eqnarray*}
  so $\Phi$ has $(p-1)/2$ twists.   
\end{proof}

\begin{cor}\label{cor:eisenstein_number}
  The number of distinct eigensystems (mod $p$) coming from Eisenstein series
  is $(p-1)^2/4$.
\end{cor}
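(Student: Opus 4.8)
The plan is to realise the set of eigensystems coming from Eisenstein series as a disjoint union of $\sim$-equivalence classes, and then to obtain its cardinality in two stages: first enumerate the distinct classes (using Proposition~\ref{prop:eisenstein}), then sum their sizes (using Proposition~\ref{prop:eisenstein_twists}). Everything reduces to a short combinatorial bookkeeping with residues modulo $p-1$.

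First I would record that the $T_\ell$-eigenvalue of $G_k$ is $\sigma_{k-1}(\ell)\equiv 1+\ell^{k-1}\pmod p$, so the eigensystem of $G_k$ depends only on $k$ modulo $p-1$; moreover, as $k$ ranges over the even integers with $4\le k\le p+1$, the residue $k\bmod(p-1)$ runs bijectively over the set $R$ of even residues modulo $p-1$, and $|R|=(p-1)/2$. Consequently the set we want to count is exactly the union of the $\sim$-classes of the $\Phi(G_k)$, $4\le k\le p+1$ even; since distinct classes are disjoint, its cardinality is the sum of the cardinalities of these classes.

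Next I would count the classes. By Proposition~\ref{prop:eisenstein}, the classes $[\Phi(G_{k_1})]$ and $[\Phi(G_{k_2})]$ coincide precisely when $k_1+k_2\equiv 2\pmod{p-1}$, so under the identification above the classes correspond to the orbits of the involution $\iota\colon r\mapsto 2-r$ on $R$ (reduction mod $p-1$). A residue is $\iota$-fixed iff $2r\equiv 2\pmod{p-1}$, i.e. $r\equiv 1\pmod{(p-1)/2}$; among even residues this has no solution when $p\equiv 1\pmod 4$ and exactly the solution $r=(p+1)/2$ when $p\equiv 3\pmod 4$. Hence the number of classes is $(p-1)/4$ if $p\equiv 1\pmod 4$ and $(p+1)/4$ if $p\equiv 3\pmod 4$.

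Finally I would sum the cardinalities. By Proposition~\ref{prop:eisenstein_twists} (equivalently Corollary~\ref{cor:twists1}) every such class has $p-1$ elements, the sole exception being the class of $G_{(p+1)/2}$ when $p\equiv 3\pmod 4$ — which is exactly the $\iota$-fixed class found above — of size $(p-1)/2$. Thus for $p\equiv 1\pmod 4$ the total is $\frac{p-1}{4}(p-1)=\frac{(p-1)^2}{4}$, and for $p\equiv 3\pmod 4$ it is $\bigl(\frac{p+1}{4}-1\bigr)(p-1)+\frac{p-1}{2}=\frac{p-1}{4}\bigl((p-3)+2\bigr)=\frac{(p-1)^2}{4}$, as claimed. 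The main point to handle carefully is the $p\equiv 3\pmod 4$ case: one must verify that $(p+1)/2$ is even and that its class is precisely the $\iota$-fixed orbit used in the class count, so that the two computations mesh; beyond this the argument is entirely elementary.
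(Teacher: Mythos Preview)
Your proof is correct and follows exactly the approach the paper intends: the paper's own proof is the one-liner ``This follows via simple arithmetic from Propositions~\ref{prop:eisenstein} and~\ref{prop:eisenstein_twists},'' and you have carried out that arithmetic carefully by counting $\sim$-classes via the involution $r\mapsto 2-r$ on even residues and then summing class sizes. The split into the cases $p\equiv 1,3\pmod 4$ and the identification of the unique fixed class with the unique short orbit when $p\equiv 3\pmod 4$ is precisely the bookkeeping the paper leaves to the reader.
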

\begin{proof}
  This follows via simple arithmetic from Propositions~\ref{prop:eisenstein}
  and~\ref{prop:eisenstein_twists}.
\end{proof}

We end this section by discussing the possibility that an Eisenstein series 
and a cuspidal eigenform of small weights have equivalent eigensystems:

\begin{prop}\label{prop:eis_cusp}
Let $G_k$ be the Eisenstein series of weight $k\leq p+1$ and fix an even
integer $k^\prime\neq 14$ with $12\leq k^\prime\leq p+1$.  A cuspidal eigenform
$f$ of weight $k^\prime$ with $\Phi(G_k)\sim \Phi(f)$ exists if and only if
$k^\prime=k$ and $p$ divides the numerator of the $k$-th Bernoulli number
$B_k$.
\end{prop}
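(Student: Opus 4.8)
The plan is to analyze when the eigensystem $\Phi(f)$ of a cuspidal eigenform $f$ of weight $k'$ can be a twist of the Eisenstein eigensystem $\Phi(G_k)$, using the $\tht$-cycle machinery of Theorem~\ref{thm:equiv} together with a direct comparison of Hecke eigenvalues. First I would dispose of the easy direction: if $k'=k$ and $p$ divides the numerator of $B_k$, then by the von Staudt--Kummer congruences the constant term $-B_k/(2k)$ of $G_k$ vanishes mod $p$, so $G_k$ is itself a cusp form mod $p$; being the unique Eisenstein series, its eigensystem equals $1+\ell^{k-1}$ on $T_\ell$, and the space $M_k$ then contains a cuspidal eigenform realizing this eigensystem (or one simply observes that $G_k$ is now cuspidal and is, up to scaling and taking components, the required $f$). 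Some care is needed here because $M_{k'}$ may decompose into several Hecke eigenspaces, but since we only assert \emph{existence} of a cuspidal eigenform with the prescribed \emph{equivalent} (not equal) eigensystem, and $G_k \pmod p$ is cuspidal and lies in one such eigenspace, this direction is routine.

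For the converse — the substantive direction — suppose a cuspidal eigenform $f$ of weight $k'$ satisfies $\Phi(f)\sim\Phi(G_k)$. Both have weight and filtration in the range $[4,p+1]$ (the filtration of $G_k$ equals $k$ for $k\le p+1$, $k\neq p-1$; the case $k=p-1$, where $G_k=A$ has filtration $0$, is handled separately via Proposition~\ref{prop:ker_theta_eigen} since then $\Phi(G_k)$ is Eisenstein-type only through the exceptional weight). Eisenstein series are ordinary, so $a_p(G_k)\neq 0$; by Theorem~\ref{thm:equiv} this forces $a_p(f)\neq 0$ as well, and either $k'=k$, or $k'\neq k$ with $k+k'=p+1$ (after ordering the two weights). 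So the key step is to rule out the case $k+k'=p+1$, $k\neq k'$. In that case $\Phi(f)$ is, up to a twist $[i]$, equal to $\Phi(G_k)$, i.e.\ $a_f(\ell) = \ell^i(1+\ell^{k-1})$ for all $\ell\neq p$; by the Chebotarev/polynomial-counting argument already used in the proof of Proposition~\ref{prop:eisenstein} (a polynomial of degree $\le p-2$ in $\fp[x]$ with $p-1$ roots must vanish), this pins down the multiset $\{a_f(\ell)\bmod{p}\}$ to be $\{1,\ell^{k-1}\}$ reindexed — but then $f$ would have the \emph{same} eigensystem as an Eisenstein series, contradicting that $f$ is cuspidal (a cusp form and an Eisenstein series of weight $\le p+1$ cannot share an eigensystem, since their $q$-expansions would then be equal by the Sturm--Murty bound, yet one has vanishing and the other nonvanishing constant term — here one must be slightly careful and instead argue at the level of the reduction: the cuspidal eigenform in weight $k$ with eigenvalues $1+\ell^{k-1}$ must be $G_k \bmod p$ itself, which is only cuspidal when $p\mid\mathrm{num}(B_k)$, and then $k'=k$). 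This collapses the $k+k'=p+1$ case back into the case $k'=k$.

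The main obstacle I anticipate is the bookkeeping around filtrations versus weights and the excluded weight $k'=14$ (and the hypothesis $k'\ge 12$): weight $14$ has a one-dimensional cusp space but $\Delta E_2$-type coincidences or the fact that $\dim S_{14}=0$ actually — so the exclusion is presumably there to avoid a genuinely exceptional small-weight coincidence, and I would need to check the classification tables in Edixhoven's Proposition~\ref{prop:theta_cycle} at the boundary weights $k,k'\in\{p-1,p,p+1\}$ by hand. Concretely, the hard part is showing that once we know $a_f(\ell)\equiv 1+\ell^{k-1}$ for all $\ell\ne p$ and $f$ is cuspidal of filtration $k$, we must have $p\mid\mathrm{num}(B_k)$: this is exactly the statement that the mod-$p$ Eisenstein eigensystem is cuspidal precisely when $G_k$ reduces to a cusp form, which is the von Staudt--Kummer criterion — so the real content is identifying $f$ with the reduction of $G_k$ via the Sturm--Murty bound in weight $k$ and then invoking Swinnerton-Dyer's Lemma~4. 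Everything else is assembling Theorem~\ref{thm:equiv}, Proposition~\ref{prop:nr_twists}, and the polynomial-counting lemma already in hand.
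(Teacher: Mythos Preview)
Your route through Theorem~\ref{thm:equiv} differs from the paper's: the paper argues directly that $w(\tht^{i+1}G_k)=k+(i+1)(p+1)$, combines this with $w(\tht f)=k'+p+1\le 2(p+1)$, and reads off $i=0$, hence $k'=k$. Your dichotomy ``$k'=k$ or $k+k'=p+1$'' is the honest output of the $\tht$-cycle machinery, but the second branch cannot be ``collapsed back into the case $k'=k$'' as you claim.

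Concretely: when $k+k'=p+1$ with both forms ordinary, Proposition~\ref{prop:eisenstein} already gives $\Phi(G_k)\sim\Phi(G_{p+1-k})=\Phi(G_{k'})$, and your Sturm--Murty comparison in weight $k'$ then identifies $f$ with $G_{k'}$, which is cuspidal exactly when $p$ divides the numerator of $B_{k'}$. This does \emph{not} force $k'=k$ or $p\mid B_k$. For instance, take $p=37$, $k=6$, $k'=32$: the irregular pair $(37,32)$ makes $G_{32}$ cuspidal mod $37$, and $\Phi(G_6)\sim\Phi(G_{32})$, so $f=G_{32}$ satisfies the hypothesis of the proposition with $k'\neq k$. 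The paper's own proof has the same gap in disguise: its filtration formula $w(\tht^{i+1}G_k)=k+(i+1)(p+1)$ ignores the drop in the $\tht$-cycle at step $p-k+1$ (see Proposition~\ref{prop:theta_cycle}(a)), so the deduction $i=0$ is unjustified once $i\ge p-k$. What both arguments genuinely establish is that $k'\in\{k,\,p+1-k\}$ and $p$ divides the numerator of $B_{k'}$. Since Step~3(a) of the algorithm only ever compares the cuspidal eigenforms of a fixed weight $k$ with $G_k$ itself, this weaker conclusion is exactly what is needed there; but the ``only if'' clause of the proposition, read literally with $\sim$, requires this amendment.
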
 
\begin{proof} 
The argument can be extracted from
page~334 of~\cite{Serre1972}; we include it here for completeness.  

Suppose there exists a form $f$ with the given properties.  Then there is some 
integer $i$ such that $\Phi(f)=\Phi(G_k)[i]$, i.e. $\tht f=\tht^{i+1}G_k$.  
The conditions imposed on $k^\prime$ exclude the possibility of it being 
divisible by $p$, therefore the filtration of $\tht f$ is $k^\prime+p+1$.   
Similarly, the filtration of $\tht^{i+1}G_k$ is $k+(i+1)(p+1)$.  Therefore
\begin{equation*}
  k^\prime+p+1=k+(i+1)(p+1).
\end{equation*}
However, $k^\prime\leq p+1$ so $k^\prime+p+1\leq 2(p+1)$, from which we 
conclude that $i=0$, so $k^\prime=k$.  

Therefore $\tht (f-G_k)=0$.  Again since $k$ is not divisible by $p$ we get 
that $f=G_k$, in particular the constant term of $G_k$ is zero; but this 
constant term is the reduction modulo $p$ of $B_k/(2k)$, therefore $p$ must 
divide the numerator of $B_k/(2k)$.  Using one last time the condition 
$k\leq p+1$ we conclude that $p$ divides the numerator of $B_k/(2k)$ if and 
only if it divides the numerator of $B_k$.
\end{proof}

\section{Bounds on the number of eigensystems}\label{sect:upper_bound}
In this section, we derive an explicit formula for the well-known upper bound
on the number\footnote{We use
Khare's notation, which is motivated by the fact that this is the number
of continuous semisimple odd representations
\begin{equation*}
  \rho\colon \gq\longto\GL_2(\fpbar)
\end{equation*}
that are unramified outside $p$.  Note that we do not restrict our attention
to irreducible representations here, but by Corollary~\ref{cor:eisenstein_number}
the difference is known to be $(p-1)^2/4$.}
$N(2, p)$ of level $1$ Hecke eigensystems modulo $p$.

Let $N_{\text{twist}}(2, p)$ be the number of equivalence
classes up to twist of level~$1$ Hecke eigensystems modulo $p$.  
We have seen that any eigensystem has at most $p-1$
twists, so we get the inequality
\begin{equation*}
  N(2, p)\leq N_{\text{twist}}(2, p)\cdot(p-1).
\end{equation*}

We know that each eigensystem occurs, up to twist, in weights at most
$p+1$.  Therefore we can bound $N_{\text{twist}}(2, p)$ by the sum of
the dimensions of the spaces $M_k(\SL_2(\ZZ))$ for $k\leq p+1$:
\begin{equation*}
  N_{\text{twist}}(2, p)\leq \sum_{k=4}^{p+1} \dim M_k(\SL_2(\ZZ)).
\end{equation*}

We now use the classical dimension formulas (see, e.g. Corollary~1 in
Section~1.3 of~\cite{Zagier2008}):
\begin{equation*}
  \dim M_k(\SL_2(\ZZ))=\begin{cases}
    0 & \text{if } k<0 \text{ or $k$ is odd}\\
    \lfloor \frac{k}{12}\rfloor & \text{if }k\equiv 2\pmod{12}\\
    \lfloor \frac{k}{12}\rfloor + 1 & \text{otherwise}.
  \end{cases}
\end{equation*}

After a straightforward calculation, we obtain the following
expression for the sum of dimensions (write $Q$ for the quotient of
the integer division of $p+1$ by $12$):
\begin{equation*}
  \sum_{k=4}^{p+1} \dim M_k(\SL_2(\ZZ))=\begin{cases}
    3Q^2+4Q & \text{if }p\equiv 1\pmod{12}\\
    3Q^2+6Q+2 & \text{if }p\equiv 5\pmod{12}\\
    3Q^2+7Q+3 & \text{if }p\equiv 7\pmod{12}\\
    3Q^2+3Q & \text{if }p\equiv 11\pmod{12}.
  \end{cases}
\end{equation*}

It remains to multiply this value by $p-1$ in order to obtain the
desired upper bound on $N(2, p)$.  Note that this upper bound is asymptotic
to $p^3/48$ as $p\to\infty$.

\section{Detailed description of the algorithm}

\subsection*{Step 1: Get the eigensystems coming from Eisenstein series}

According to Proposition~\ref{prop:eisenstein}, the complete list of such 
eigensystems up to twist is: $G_k$ for $4\leq k\leq (p+1)/2$, together with 
$G_{p+1}$.

\subsection*{Step 2: Get the eigensystems coming from cusp forms of weight 
up to $p+1$} 

Fix a weight $12\leq k\leq p+1$.  We took two different approaches:
\begin{itemize}
\item Compute the (cuspidal) Victor Miller basis over $\fp$ of weight $k$ up to 
and including the $p$-th coefficient, then decompose the span of this basis 
into Hecke eigensystems.
\item Compute the (cuspidal) modular symbols of weight $k$ and sign $+1$ over
$\fp$, then decompose into Hecke eigenspaces.
\end{itemize}

This gives us a list of cuspidal eigenforms $f_1,\ldots,f_n$ with 
$n\leq\dim S_k$.

\subsection*{Step 3: Remove duplicates (up to twist)}
Three special circumstances can arise:
\begin{enumerate}

  \item If $p$ divides the numerator of $B_k$, then one of the $f_j$'s is the
  Eisenstein series $G_k$, by Proposition~\ref{prop:eis_cusp}.  Determine the
  relevant $f_j$ (need to compare $q$-expansion up to $\beta(k)$) and remove it
  from the list.  
  
  \item If $k>(p+1)/2$: for each $j\in\{1,\ldots,n\}$ such that 
  $a_p(f_j)\neq 0$, there could be a companion form in weight $p+1-k$.  In 
  this case the eigensystem of $f_j$ is a twist of an eigensystem that has 
  already been listed, so we should remove $f_j$ from the list.  Checking this 
  requires comparing the ordinary $f_j$'s with the ordinary forms of weight 
  $p+1-k$, up to precision $\beta(k+p+1)$. 

  Here is the justification for the comparison bound: we have $f$ of weight
  $k>(p+1)/2$ and $g$ of weight $p+1-k$.  We want to check whether the
  $q$-expansions $\tht f$ (in weight $k+p+1$) and $\tht^k g$ (in weight
  $kp+p+1$) are equal.  A priori it seems that this must be checked in weight
  $kp+p+1$, where we are verifying the equality $A^k \tht f=\tht^k g$.
  However, as Buzzard pointed out to us, we can do much better by using theta 
  cycles.  We are in the situation illustrated in Figure~\ref{fig:theta_ord}: 
  $\tht f$ is the first low point of the cycle, and $\theta g$ is the second low 
  point.  Following the cycle, we see that $\tht^k g$ is back at the first low 
  point, i.e. that $\tht^k g$ has filtration $k+p+1$.  Therefore it suffices to
  perform the comparison in weight $k+p+1$, checking $q$-expansions up to 
  $\beta(k+p+1)$.  

  \item If $k>(p+3)/2$: for each $j\in\{1,\ldots,n\}$ such that $a_p(f_j)=0$,
  there exists a non-ordinary form $g$ of weight $p+3-k$ with the same
  eigensystem up to twist, therefore $f_j$ should be removed from the list.

  As a consistency check (inexpensive since there are not many nonordinary
  forms), we could actually compare the nonordinary $f_j$'s to the nonordinary
  forms of weight $p+3-k$ and find the corresponding form there.  As we see
  from the theta cycle of $f$ in Figure~\ref{fig:theta_nonord}, we want to check
  whether the $q$-expansions of $\tht^{p+1-k}f$ and $g$ agree, in weight
  $p+3-k$.  We therefore need to compare up to precision $\beta(p+3-k)$.

\end{enumerate}

We now have the list of all eigensystems up to twist.

%

\section{Summary and discussion of results}\label{sect:computation}

The table appearing below in the appendix records, for all the primes under
$2000$, the number of distinct non-Eisenstein\footnote{We decided to exclude
the Eisenstein eigensystems from the count in order to ease comparison with
Centeleghe's results.  As Corollary~\ref{cor:eisenstein_number} indicates,
the number of Eisenstein eigensystems (mod $p$) is $(p-1)^2/4$.}
eigensystems mod $p$, the upper bound on this number, as well as any
interesting features that each prime might have.  The latter are denoted by
an E for Eisenstein-cuspidal congruences, a C for companion forms, or an
N for nonordinary forms, followed by the weights in which the corresponding
phenomenon occurs.  Note that companion forms and nonordinary forms always
show up in pairs, but only the smallest weight is recorded in the table for
each such pair. 

The first explicit examples of companion forms appear in~\cite{Gross1990},
resulting from computations done by Elkies and Atkin.  They focused on
finding primes at which the reduction of the six cuspidal eigenforms with
rational coefficients have companions.  Higher degree examples were given
by Centeleghe in his thesis~\cite{Centeleghe2009}, going up to $p=619$.  Our
results extend this range to all $p<2000$.

Similarly, we find new examples of nonordinary forms mod $p<2000$ of weight
$k\leq p+1$, extending those listed in Tables 5 and 6 of~\cite{Centeleghe2009}
and the results of Gouv\^ea in~\cite{Gouvea1997}.

It is interesting to compare our results with Centeleghe's table
in~\cite{Centeleghe2011}.  Out of the 299 lower bounds he computes, 164 are 
marked with a star, meaning that they are proved to give the
actual number of representations.  Our results indicate that a further 111
of his lower bounds coincide with the exact numbers, for a total of 275 out of
299.

Finally, we notice that the ``interesting'' phenomena described above are
quite rare, and the actual number of eigensystems deviates very little from
the explicit upper bound given in Section~\ref{sect:upper_bound}.  For instance,
among the last 20 primes in the range we computed, the relative difference
between the actual number and the upper bound is always less than $0.017\%$.
This relative ``error'' is plotted in Figure~\ref{fig:reldiff} at three
different zoom levels.  Note that the primes congruent to $1$ mod $4$,
represented by blue discs in the figure, generally tend to be closer to the
upper bound than the primes congruent to $3$ mod $4$.

\begin{figure}[h]
\centering
\includegraphics[scale=0.43]{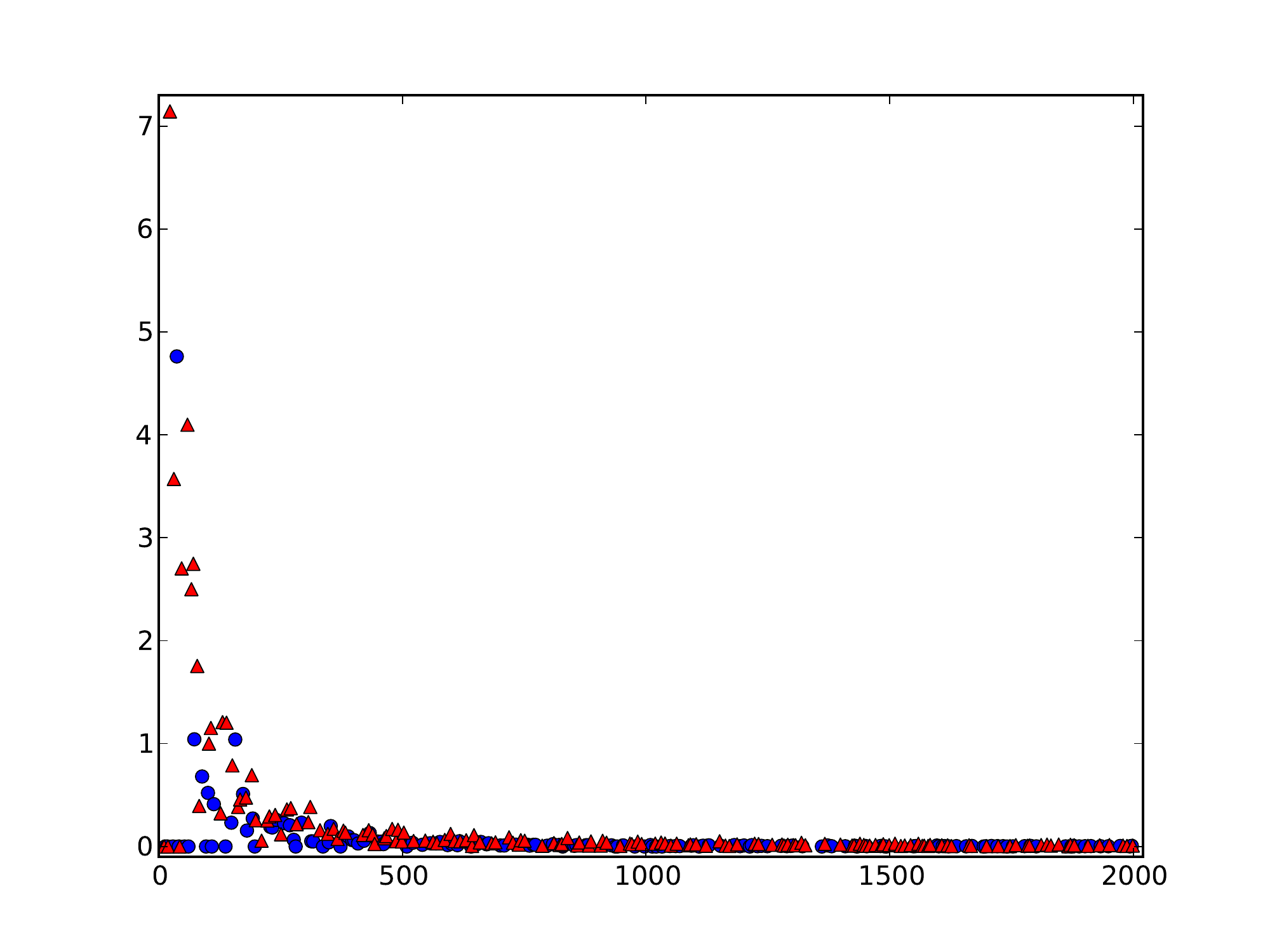}
\includegraphics[scale=0.43]{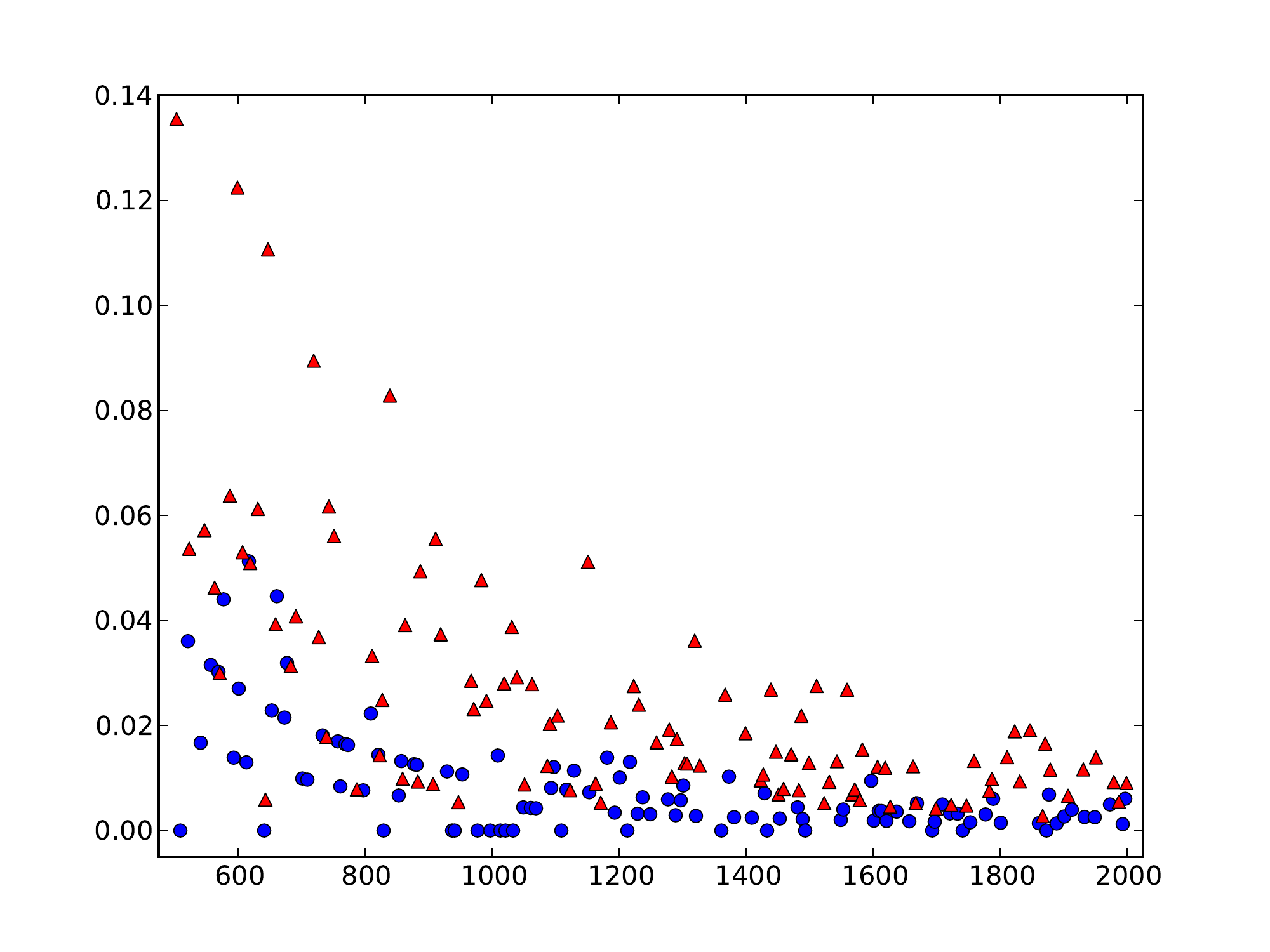}
\includegraphics[scale=0.43]{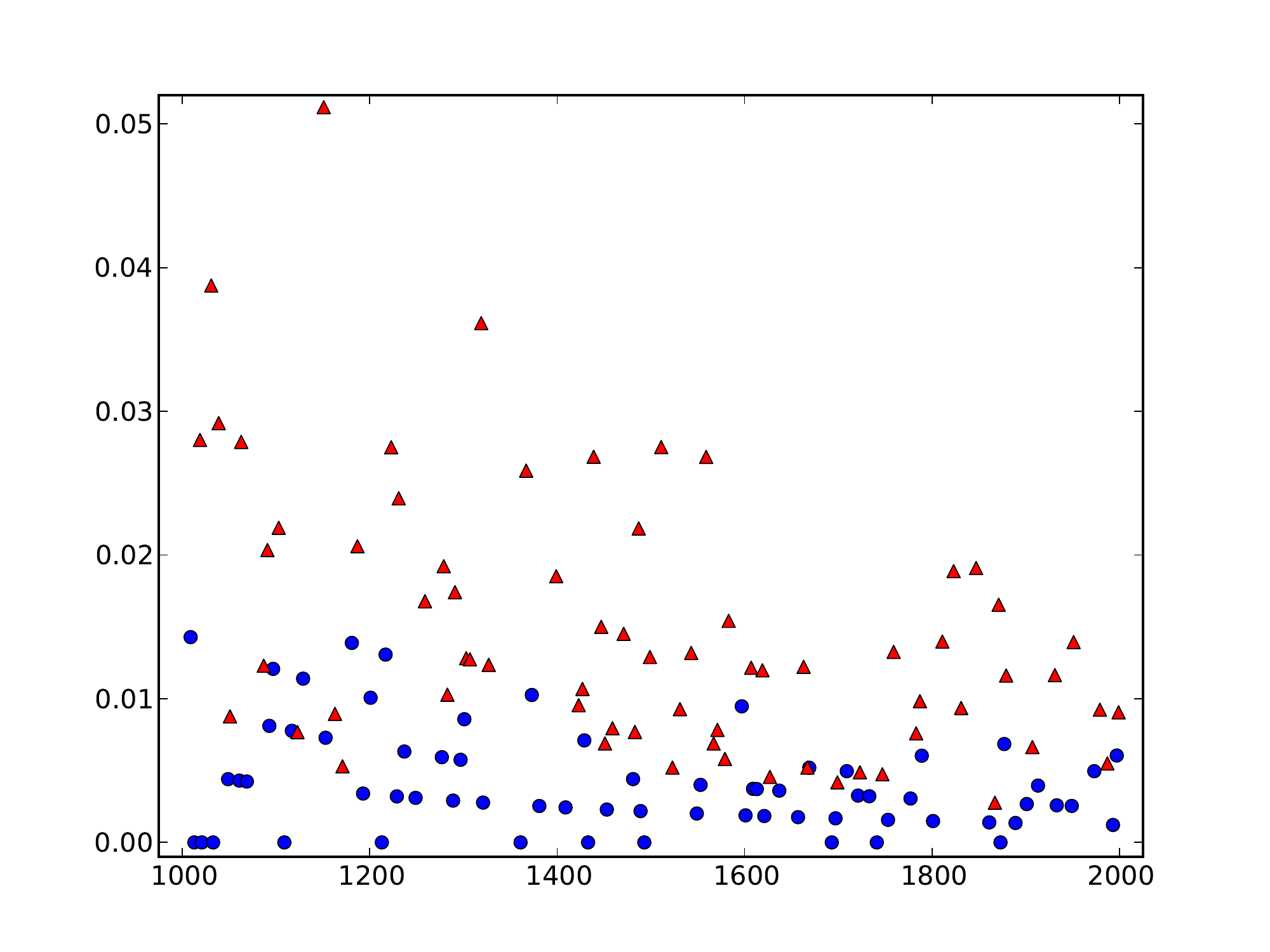}
\caption{The relative difference (as a percentage) between the actual number of 
eigensystems and the upper bound.  The three different views show the primes:
(top) $11\leq p < 2000$; (middle) $500 < p < 2000$; (bottom) $1000 < p < 2000$.
 The blue discs represent primes congruent to $1$ mod $4$, while the red 
 triangles represent primes congruent to $3$ mod $4$.}
\label{fig:reldiff}
\end{figure}

\begin{landscape}
\begin{multicols}{2}
\section*{Appendix: Table of results}
{\footnotesize
\begin{tabular}{r|r|r|l}
$p$ & number & bound & notes\\\hline
$11$ & $10$ & $10$ & \\
$13$ & $12$ & $12$ & \\
$17$ & $48$ & $48$ & \\
$19$ & $72$ & $72$ & \\
$23$ & $143$ & $154$ & \\
$29$ & $336$ & $336$ & \\
$31$ & $405$ & $420$ & \\
$37$ & $720$ & $756$ & E [32] \\
$41$ & $1080$ & $1080$ & \\
$43$ & $1260$ & $1260$ & \\
$47$ & $1656$ & $1702$ & \\
$53$ & $2496$ & $2496$ & \\
$59$ & $3393$ & $3538$ & E [44] N [16] \\
$61$ & $3900$ & $3900$ & \\
$67$ & $5148$ & $5280$ & E [58] \\
$71$ & $6195$ & $6370$ & \\
$73$ & $6840$ & $6912$ & \\
$79$ & $8736$ & $8892$ & N [38] \\
$83$ & $10373$ & $10414$ & \\
$89$ & $12848$ & $12936$ & \\
$97$ & $16896$ & $16896$ & \\
$101$ & $19100$ & $19200$ & E [68] \\
$103$ & $20196$ & $20400$ & E [24] \\
$107$ & $22737$ & $23002$ & C [26] N [28] \\
$109$ & $24300$ & $24300$ & \\
$113$ & $27104$ & $27216$ & \\
$127$ & $38934$ & $39060$ & \\
$131$ & $42510$ & $43030$ & E [22] N [40] \\
\end{tabular}

\begin{tabular}{r|r|r|l}
$p$ & number & bound & notes\\\hline
$137$ & $49368$ & $49368$ & \\
$139$ & $50991$ & $51612$ & C [20] N [36] \\
$149$ & $63788$ & $63936$ & E [130] \\
$151$ & $66075$ & $66600$ & C [52] N [60] \\
$157$ & $74256$ & $75036$ & E [62, 110] \\
$163$ & $83916$ & $84240$ & \\
$167$ & $90387$ & $90802$ & \\
$173$ & $100620$ & $101136$ & C [68] N [24] \\
$179$ & $111784$ & $112318$ & C [30] \\
$181$ & $115920$ & $116100$ & \\
$191$ & $136040$ & $136990$ & C [30] \\
$193$ & $140928$ & $141312$ & C [48] N [72] \\
$197$ & $150528$ & $150528$ & \\
$199$ & $154836$ & $155232$ & \\
$211$ & $185535$ & $185640$ & \\
$223$ & $219225$ & $219780$ & N [72] \\
$227$ & $231424$ & $232102$ & \\
$229$ & $237804$ & $238260$ & C [58, 58] \\
$233$ & $250792$ & $251256$ & E [84] \\
$239$ & $270725$ & $271558$ & \\
$241$ & $277680$ & $278400$ & C [98] \\
$251$ & $314875$ & $315250$ & \\
$257$ & $337920$ & $338688$ & E [164] N [50, 100] \\
$263$ & $362084$ & $363394$ & E [100] N [98] \\
$269$ & $388332$ & $389136$ & C [84] N [78] \\
$271$ & $396495$ & $397980$ & E [84] C [18, 40] \\
$277$ & $425040$ & $425316$ & N [92] \\
$281$ & $444360$ & $444360$ & \\
$283$ & $453033$ & $454020$ & E [20] N [72, 72] \\
$293$ & $503408$ & $504576$ & E [156] \\
\end{tabular}

\begin{tabular}{r|r|r|l}
$p$ & number & bound & notes\\\hline
$307$ & $580023$ & $581400$ & E [88] C [52] N [78] \\
$311$ & $602485$ & $604810$ & E [292] C [32, 126] \\
$313$ & $616200$ & $616512$ & N [114] \\
$317$ & $640532$ & $640848$ & \\
$331$ & $729465$ & $730620$ & C [164] N [84, 84] \\
$337$ & $771456$ & $771456$ & \\
$347$ & $842164$ & $843202$ & E [280] C [74] \\
$349$ & $857472$ & $857820$ & \\
$353$ & $886336$ & $888096$ & E [186, 300] N [76] \\
$359$ & $933127$ & $934738$ & \\
$367$ & $998448$ & $999180$ & \\
$373$ & $1049412$ & $1049412$ & \\
$379$ & $1099791$ & $1101492$ & E [100, 174] C [20] N [56] \\
$383$ & $1135686$ & $1137214$ & \\
$389$ & $1190772$ & $1191936$ & E [200] \\
$397$ & $1266804$ & $1267596$ & C [16] \\
$401$ & $1306000$ & $1306800$ & E [382] \\
$409$ & $1386792$ & $1387200$ & E [126] \\
$419$ & $1491006$ & $1492678$ & N [106] \\
$421$ & $1513260$ & $1514100$ & E [240] C [112] \\
$431$ & $1623250$ & $1625830$ & C [80] \\
$433$ & $1646352$ & $1648512$ & E [366] C [188] \\
$439$ & $1716741$ & $1718712$ & C [214] \\
$443$ & $1766232$ & $1766674$ & \\
$449$ & $1839040$ & $1839936$ & \\
$457$ & $1939824$ & $1940736$ & \\
$461$ & $1992260$ & $1992720$ & E [196] \\
$463$ & $2017323$ & $2018940$ & E [130] N [182] \\
$467$ & $2070205$ & $2072302$ & E [94, 194] \\
$479$ & $2233694$ & $2237518$ & N [236] \\
\end{tabular}

\begin{tabular}{r|r|r|l}
$p$ & number & bound & notes\\\hline
$487$ & $2351025$ & $2352240$ & \\
$491$ & $2407370$ & $2411290$ & E [292, 336, 338] C [124] N [124, 124] \\
$499$ & $2530587$ & $2531832$ & N [126] \\
$503$ & $2590320$ & $2593834$ & C [162] \\
$509$ & $2688336$ & $2688336$ & \\
$521$ & $2883400$ & $2884440$ & \\
$523$ & $2916414$ & $2917980$ & E [400] \\
$541$ & $3231360$ & $3231900$ & E [86] \\
$547$ & $3339609$ & $3341520$ & E [270, 486] \\
$557$ & $3528376$ & $3529488$ & E [222] \\
$563$ & $3644008$ & $3645694$ & \\
$569$ & $3763000$ & $3764136$ & C [86] \\
$571$ & $3803040$ & $3804180$ & \\
$577$ & $3924288$ & $3926016$ & E [52] C [54] N [36] \\
$587$ & $4132765$ & $4135402$ & E [90, 92] \\
$593$ & $4263584$ & $4264176$ & E [22] \\
$599$ & $4390516$ & $4395898$ & N [222] \\
$601$ & $4438800$ & $4440000$ & N [136] \\
$607$ & $4572876$ & $4575300$ & E [592] \\
$613$ & $4712400$ & $4713012$ & E [522] \\
$617$ & $4804184$ & $4806648$ & E [20, 174, 338] \\
$619$ & $4851300$ & $4853772$ & E [428] C [158, 216] \\
$631$ & $5140170$ & $5143320$ & E [80, 226] \\
$641$ & $5393280$ & $5393280$ & \\
$643$ & $5443839$ & $5444160$ & \\
$647$ & $5541065$ & $5547202$ & E [236, 242, 554] N [268] \\
$653$ & $5702392$ & $5703696$ & E [48] N [66] \\
$659$ & $5861135$ & $5863438$ & E [224] \\
$661$ & $5914260$ & $5916900$ & \\
$673$ & $6245568$ & $6246912$ & E [408, 502] \\
\end{tabular}

\begin{tabular}{r|r|r|l}
$p$ & number & bound & notes\\\hline
$677$ & $6357780$ & $6359808$ & E [628] \\
$683$ & $6529468$ & $6531514$ & E [32] \\
$691$ & $6762000$ & $6764760$ & E [12, 200] \\
$701$ & $7063700$ & $7064400$ & N [268] \\
$709$ & $7309392$ & $7310100$ & \\
$719$ & $7619057$ & $7625878$ & N [358] \\
$727$ & $7881456$ & $7884360$ & E [378] \\
$733$ & $8080548$ & $8082012$ & C [184] \\
$739$ & $8281836$ & $8283312$ & \\
$743$ & $8414280$ & $8419474$ & C [134] \\
$751$ & $8690625$ & $8695500$ & E [290] C [158] \\
$757$ & $8904924$ & $8906436$ & E [514] \\
$761$ & $9048560$ & $9049320$ & E [260] \\
$769$ & $9337344$ & $9338880$ & N [62] \\
$773$ & $9484792$ & $9486336$ & E [732] C [280] \\
$787$ & $10012854$ & $10013640$ & \\
$797$ & $10401332$ & $10402128$ & E [220] \\
$809$ & $10878912$ & $10881336$ & E [330, 628] \\
$811$ & $10958895$ & $10962540$ & E [544] N [140] \\
$821$ & $11373400$ & $11375040$ & E [744] \\
$823$ & $11457036$ & $11458680$ & \\
$827$ & $11624711$ & $11627602$ & E [102] \\
$829$ & $11712060$ & $11712060$ & \\
$839$ & $12133402$ & $12143458$ & E [66] N [140] \\
$853$ & $12762960$ & $12763812$ & N [68] \\
$857$ & $12943576$ & $12945288$ & C [264] \\
$859$ & $13035165$ & $13036452$ & \\
$863$ & $13215322$ & $13220494$ & \\
$877$ & $13874964$ & $13876716$ & E [868] \\
$881$ & $14066800$ & $14068560$ & E [162] \\
\end{tabular}

\begin{tabular}{r|r|r|l}
$p$ & number & bound & notes\\\hline
$883$ & $14163597$ & $14164920$ & N [222] \\
$887$ & $14352314$ & $14359402$ & E [418] \\
$907$ & $15355341$ & $15356700$ & N [228] \\
$911$ & $15553265$ & $15561910$ & C [366] \\
$919$ & $15970905$ & $15976872$ & C [120] \\
$929$ & $16504480$ & $16506336$ & E [520, 820] \\
$937$ & $16937856$ & $16937856$ & \\
$941$ & $17156880$ & $17156880$ & \\
$947$ & $17487756$ & $17488702$ & \\
$953$ & $17822392$ & $17824296$ & E [156] \\
$967$ & $18619167$ & $18624480$ & C [376, 378] \\
$971$ & $18853405$ & $18857770$ & E [166] \\
$977$ & $19210608$ & $19210608$ & \\
$983$ & $19558985$ & $19568314$ & C [144] \\
$991$ & $20046510$ & $20051460$ & C [166] \\
$997$ & $20418996$ & $20418996$ & \\
$1009$ & $21164976$ & $21168000$ & C [126] \\
$1013$ & $21422016$ & $21422016$ & \\
$1019$ & $21800470$ & $21806578$ & C [356] \\
$1021$ & $21935100$ & $21935100$ & \\
$1031$ & $22580175$ & $22588930$ & \\
$1033$ & $22720512$ & $22720512$ & \\
$1039$ & $23113665$ & $23120412$ & \\
$1049$ & $23795888$ & $23796936$ & N [426] \\
$1051$ & $23931600$ & $23933700$ & N [368] \\
$1061$ & $24624860$ & $24625920$ & E [474] \\
$1063$ & $24758937$ & $24765840$ & N [352] \\
$1069$ & $25187712$ & $25188780$ & N [280] \\
$1087$ & $26484282$ & $26487540$ & N [52] \\
$1091$ & $26776940$ & $26782390$ & E [888] \\
\end{tabular}

\begin{tabular}{r|r|r|l}
$p$ & number & bound & notes\\\hline
$1093$ & $26927628$ & $26929812$ & C [164, 460] \\
$1097$ & $27224640$ & $27227928$ & C [324, 408] \\
$1103$ & $27672873$ & $27678934$ & \\
$1109$ & $28134336$ & $28134336$ & \\
$1117$ & $28747044$ & $28749276$ & E [794] N [476] \\
$1123$ & $29214636$ & $29216880$ & N [152] \\
$1129$ & $29685576$ & $29688960$ & E [348] N [192] \\
$1151$ & $31449050$ & $31465150$ & E [534, 784, 968] \\
$1153$ & $31627008$ & $31629312$ & E [802] \\
$1163$ & $32459889$ & $32462794$ & \\
$1171$ & $33137325$ & $33139080$ & \\
$1181$ & $33993440$ & $33998160$ & C [360] N [182] \\
$1187$ & $34513786$ & $34520902$ & N [114, 254, 298] \\
$1193$ & $35047184$ & $35048376$ & E [262] \\
$1201$ & $35756400$ & $35760000$ & E [676] C [460] \\
$1213$ & $36846012$ & $36846012$ & \\
$1217$ & $37208384$ & $37213248$ & E [784, 866, 1118] \\
$1223$ & $37757967$ & $37768354$ & \\
$1229$ & $38327108$ & $38328336$ & E [784] \\
$1231$ & $38506995$ & $38516220$ & N [100] \\
$1237$ & $39081084$ & $39083556$ & E [874] \\
$1249$ & $40234272$ & $40235520$ & N [224] \\
$1259$ & $41206419$ & $41213338$ & N [316] \\
$1277$ & $43008856$ & $43011408$ & C [540] N [532] \\
$1279$ & $43205985$ & $43214292$ & E [518] \\
$1283$ & $43618127$ & $43622614$ & E [510] \\
$1289$ & $44237648$ & $44238936$ & \\
$1291$ & $44437920$ & $44445660$ & E [206, 824] N [324] \\
$1297$ & $45067104$ & $45069696$ & E [202, 220] \\
$1301$ & $45485700$ & $45489600$ & E [176] \\
\end{tabular}

\begin{tabular}{r|r|r|l}
$p$ & number & bound & notes\\\hline
$1303$ & $45694341$ & $45700200$ & C [410] \\
$1307$ & $46118125$ & $46124002$ & E [382, 852] \\
$1319$ & $47392644$ & $47409778$ & E [304] \\
$1321$ & $47624280$ & $47625600$ & C [168] \\
$1327$ & $48273693$ & $48279660$ & E [466] \\
$1361$ & $52097520$ & $52097520$ & \\
$1367$ & $52778142$ & $52791802$ & E [234] \\
$1373$ & $53486048$ & $53491536$ & C [344] N [444, 520] \\
$1381$ & $54432720$ & $54434100$ & E [266] \\
$1399$ & $56586147$ & $56596632$ & \\
$1409$ & $57820928$ & $57822336$ & E [358] \\
$1423$ & $59561892$ & $59567580$ & \\
$1427$ & $60066685$ & $60073102$ & N [358] \\
$1429$ & $60321576$ & $60325860$ & E [996] C [94] \\
$1433$ & $60835656$ & $60835656$ & \\
$1439$ & $61588821$ & $61605358$ & E [574] N [674] \\
$1447$ & $62631321$ & $62640720$ & \\
$1451$ & $63159100$ & $63163450$ & \\
$1453$ & $63423360$ & $63424812$ & N [702] \\
$1459$ & $64211049$ & $64216152$ & \\
$1471$ & $65808225$ & $65817780$ & \\
$1481$ & $67169800$ & $67172760$ & N [530] \\
$1483$ & $67440633$ & $67445820$ & E [224] N [694] \\
$1487$ & $67980042$ & $67994902$ & \\
$1489$ & $68267952$ & $68269440$ & \\
$1493$ & $68822976$ & $68822976$ & \\
$1499$ & $69649510$ & $69658498$ & E [94] \\
$1511$ & $71329380$ & $71349010$ & C [498] \\
$1523$ & $73062849$ & $73066654$ & E [1310] \\
$1531$ & $74219535$ & $74226420$ & N [252] \\
\end{tabular}

\begin{tabular}{r|r|r|l}
$p$ & number & bound & notes\\\hline
$1543$ & $75979737$ & $75989760$ & C [732] \\
$1549$ & $76879872$ & $76881420$ & C [110] \\
$1553$ & $77477392$ & $77480496$ & N [620] \\
$1559$ & $78363505$ & $78384538$ & E [862] \\
$1567$ & $79594299$ & $79599780$ & \\
$1571$ & $80206590$ & $80212870$ & \\
$1579$ & $81442158$ & $81446892$ & N [396] \\
$1583$ & $82056758$ & $82069414$ & \\
$1597$ & $84262416$ & $84270396$ & E [842] C [168, 196, 398] \\
$1601$ & $84905600$ & $84907200$ & \\
$1607$ & $85857563$ & $85868002$ & \\
$1609$ & $86185584$ & $86188800$ & E [1356] \\
$1613$ & $86831992$ & $86835216$ & E [172] \\
$1619$ & $87799961$ & $87810478$ & E [560] N [406] \\
$1621$ & $88134480$ & $88136100$ & E [980] \\
$1627$ & $89116995$ & $89121060$ & N [644] \\
$1637$ & $90775096$ & $90778368$ & E [718] N [714] \\
$1657$ & $94151880$ & $94153536$ & C [176] \\
$1663$ & $95171106$ & $95182740$ & E [270, 1508] C [396] \\
$1667$ & $95868304$ & $95873302$ & \\
$1669$ & $96213576$ & $96218580$ & E [388, 1086] C [652] \\
$1693$ & $100438812$ & $100438812$ & \\
$1697$ & $101152832$ & $101154528$ & C [432] \\
$1699$ & $101508987$ & $101513232$ & \\
$1709$ & $103315212$ & $103320336$ & C [72, 514] \\
$1721$ & $105513400$ & $105516840$ & E [30] \\
$1723$ & $105880614$ & $105885780$ & N [488] \\
$1733$ & $107740792$ & $107744256$ & E [810, 942] \\
$1741$ & $109245900$ & $109245900$ & \\
$1747$ & $110376882$ & $110382120$ & \\
\end{tabular}

\begin{tabular}{r|r|r|l}
$p$ & number & bound & notes\\\hline
$1753$ & $111523560$ & $111525312$ & E [712] \\
$1759$ & $112662309$ & $112677252$ & E [1520] \\
$1777$ & $116175264$ & $116178816$ & E [1192] \\
$1783$ & $117353610$ & $117362520$ & C [762] \\
$1787$ & $118144793$ & $118156402$ & E [1606] N [358, 498] \\
$1789$ & $118546188$ & $118553340$ & E [848, 1442] \\
$1801$ & $120958200$ & $120960000$ & C [728] \\
$1811$ & $122974115$ & $122991310$ & E [550, 698, 1520] N [824] \\
$1823$ & $125433768$ & $125457454$ & \\
$1831$ & $127107225$ & $127119120$ & E [1274] \\
$1847$ & $130463281$ & $130488202$ & E [954, 1016, 1558] \\
$1861$ & $133481040$ & $133482900$ & \\
$1867$ & $134777448$ & $134781180$ & \\
$1871$ & $135629230$ & $135651670$ & E [1794] \\
$1873$ & $136086912$ & $136086912$ & \\
$1877$ & $136953628$ & $136963008$ & E [1026] C [516] N [278] \\
$1879$ & $137386029$ & $137401992$ & E [1260] \\
$1889$ & $139610048$ & $139611936$ & E [242] \\
$1901$ & $142291000$ & $142294800$ & E [1722] C [476] \\
$1907$ & $143639972$ & $143649502$ & C [368] \\
$1913$ & $145006080$ & $145011816$ & C [702] N [872] \\
$1931$ & $149134960$ & $149152330$ & C [296] N [456, 484, 484] \\
$1933$ & $149612148$ & $149616012$ & E [1058, 1320] \\
$1949$ & $153366040$ & $153369936$ & C [44, 170] \\
$1951$ & $153821850$ & $153843300$ & E [1656] \\
$1973$ & $159108848$ & $159116736$ & C [900] N [70, 248] \\
$1979$ & $160561183$ & $160576018$ & E [148] \\
$1987$ & $162525303$ & $162534240$ & E [510] C [770] \\
$1993$ & $164011320$ & $164013312$ & E [912] \\
$1997$ & $164995348$ & $165005328$ & E [772, 1888] N [562] \\
$1999$ & $165487347$ & $165502332$ & \\
\end{tabular}
}\end{multicols}
\end{landscape}

\bibliographystyle{halpha}
\bibliography{eigenlist}
\end{document}